\newtheorem{theorem}{Theorem}[section]
\newtheorem{definition}[theorem]{Definition}
\newtheorem{corollary}[theorem]{Corollary}
\newtheorem{proposition}[theorem]{Proposition}
\newtheorem{remark}[theorem]{Remark}
\newcommand{\setdef}[2]{\{#1 \, : \, #2\}}
\newcommand{\until}[1]{\{1,\dots,#1\}}
\newcommand{\Ac}{\mathcal{A}}
\newcommand{\Hc}{\mathcal{H}}
\newcommand{\Fc}{\mathcal{F}}
\newcommand{\real}{\mathbb{R}}
\newcommand{\Cc}{\mathcal{C}}
\newcommand{\Dc}{\mathcal{D}}
\newcommand{\Sc}{\mathcal{S}}
\newcommand{\Pc}{\mathcal{P}}
\newcommand{\Bc}{\mathcal{B}}
\newcommand{\Gc}{\mathcal{G}}
\newcommand{\Vc}{\mathcal{V}}
\newcommand{\Wc}{\mathcal{W}}
\newcommand{\Ic}{\mathcal{I}}
\newcommand{\Kc}{\mathcal{K}}
\newcommand{\Lc}{\mathcal{L}}
\newcommand{\argmin}[2] {\mathrm{arg}\min_{#1}#2}
\DeclareSymbolFont{bbold}{U}{bbold}{m}{n}
\DeclareSymbolFontAlphabet{\mathbbold}{bbold}
\newcommand{\norm}[1]{\lVert#1\rVert}
\newcommand\xqed[1]{%
  \leavevmode\unskip\penalty9999 \hbox{}\nobreak\hfill
  \quad\hbox{#1}}
\newcommand\demo{\xqed{$\bullet$}}
\newcommand{\longthmtitle}[1]{\mbox{}\emph{(#1):}}
\begin{document}

\begin{frontmatter}
  
  \title{\bf Feasibility and Regularity Analysis of Safe Stabilizing
    Controllers under Uncertainty\thanksref{footnoteinfo}}

  \thanks[footnoteinfo]{This work was supported by
    ARL-W911NF-22-2-0231.}
  
  \author[UCSD]{Pol Mestres}\ead{pomestre@ucsd.edu}%
  \quad \author[UCSD]{Jorge Cortes}\ead{cortes@ucsd.edu}%
  \address[UCSD]{Department of Mechanical and Aerospace Engineering,
    University of California, San Diego.}

\begin{keyword}
  Safety-critical control, control barrier functions,
  optimization-based control design, constraint feasibility,
  worst-case and probabilistic uncertainty
\end{keyword}   

\begin{abstract}%
  This paper studies the problem of safe stabilization of
  control-affine systems under uncertainty. Our starting point is the
  availability of worst-case or probabilistic error descriptions for
  the dynamics, a control barrier function (CBF) and a control
  Lyapunov function (CLF). These descriptions give rise to
  second-order cone constraints (SOCCs) whose simultaneous
  satisfaction guarantees safe stabilization. We study the feasibility
  of such SOCCs and the regularity properties of various controllers
  satisfying them.
\end{abstract}

\end{frontmatter}

\section{Introduction}

The last years have seen a dramatic increase in the deployment of
robotic systems in diverse areas like home automation and autonomous
driving. In these applications, it is critical that robots satisfy
simultaneously safety and performance specifications in the presence
of model uncertainty.  Controllers that achieve these goals are
usually defined using tools from stability analysis and Lyapunov
theory. However, this raises several challenges. Among them, we
highlight understanding the level of uncertainty about the model that
can be tolerated while still being able to meet safety and stability
requirements, the characterization of the regularity properties of the
controller, and the identification of suitable conditions to ensure
them in order to be implementable in real-world scenarios.



\emph{Literature Review:} Control Lyapunov functions
(CLFs)~\citep{ZA:83} are a well-established tool for designing
stabilizing controllers for nonlinear systems. More recently, control
barrier functions (CBFs)~\citep{PW-FA:07,ADA-SC-ME-GN-KS-PT:19} have
been introduced as a tool to render a certain predefined set safe. If
the system is control affine, the CLF and CBF conditions can be
incorporated in a quadratic program (QP)~\citep{ADA-XX-JWG-PT:17} that
can be efficiently solved online. Several recent
  works~\citep{WSC-DVD:21,PO-JC:19-cdc,MFR-APA-PT:21,PM-JC:23-csl}
  study the feasibility of such CLF-CBF QP, as well as different
  explicit control designs based on it. However, this design assumes
complete knowledge of the dynamics and safe set.
Several recent papers have proposed alternative formulations of the
CLF-CBF QP for systems with uncertainty or learned
dynamics.
For a particular class of uncertainties,~\cite{MJ:18}
shows that the robust control design problem can still be posed as a QP.
However, imperfect knowledge of the system dynamics or safety
constraints often transforms the affine-in-the-input inequalities
arising from CBFs and CLFs into second-order cone constraints
(SOCCs). The
papers~\citep{FC-JJC-BZ-CJT-KS:21,FC-JJC-BZ-CJT-KS:21-acc} leverage
Gaussian Processes (GPs) to learn the system dynamics from data and
show that the mean and variance of the estimated GP can be used to
formulate two SOCCs whose pointwise satisfaction implies safe
stabilization of the true system with a prescribed
probability. However, during the control design stage, the SOCC
associated to stability is relaxed and hence the resulting controller
does not have stability guarantees. In the case where worst-case error
bounds for the dynamics and the CBF are
known,~\citep{KL-VD-ML-JC-NA:22-ral,KL-CQ-JC-NA:21-ral} show how the
satisfaction of two SOCCs can yield a safe stabilizing controller
valid for all models consistent with these error
bounds.~\citep{KL-YY-JC-NA:23-acc} use the framework of
distributionally robust optimization to formulate a second-order
convex program that achieves safe stabilization for systems with
parametric uncertainty with a finite number of samples. Critically,
as opposed to the uncertainty-free case, where
conditions for the simultaneous satisfaction of the CLF and CBF
conditions are available, cf.~\citep{PM-JC:23-csl}, these works lack
guarantees on the simultaneous feasibility of these SOCCs and the
regularity of controllers satisfying them. As a result, the proposed
controllers might be undefined in practice, resulting in deadlock or
unsafe, unstable, or discontinuous system behaviors. The
identification of conditions under which feasibility guarantees hold
is precisely the main subject of this paper. Finally, the
papers~\citep{VD-MJK-MF-NA:23,FC-JJC-WJ-BZ-CJT-KS:22} utilize online
data to improve the estimates of the dynamics and synthesize (also via
SOCCs) less conservative controllers.

%



\emph{Statement of Contributions:} We study the problem of safe
stabilization of control-affine systems under uncertainty. We consider
two scenarios for the estimates of the dynamics and safe set: either
worst-case error bounds or probabilistic descriptions in the form of
Gaussian Processes (GPs) are available. In both cases, the problem of
designing a safe stabilizing controller can be reduced to satisfying
two SOCCs at every point in the safe set. Our first contribution
consists of giving conditions for the feasibility of each pair of
SOCCs.  The first result is a sufficient condition that requires a
bound on the norm of a safe and stabilizing controller and quantifies
what model errors are tolerable while still being able to find a
controller that guarantees safe stabilization. Our
  second result is a sufficient condition that does not require
  knowledge of such bound and consists of finding a root of a scalar
  nonlinear equation.
Our third contribution consists in giving different regularity
properties for controllers satisfying a set of SOCCs.  First we show
that if each pair of SOCCs is feasible, then there exists a smooth
safe stabilizing controller.  Second, we show that the minimum-norm
controller satisfying each pair of SOCCs is
point-Lipschitz. Third, we provide a universal formula for
satisfying a single SOCC and hence achieving either safety or
stability. We illustrate our results in the safe stabilization of a
planar system.

\section{Preliminaries}

This section presents preliminaries on control Lyapunov and barrier
functions, and safe stabilization using worst-case and probabilistic
estimates of the dynamics.

\subsection{Notation}
We use the following notation. We denote by $\mathbb{Z}_{>0}$,
$\real$, and $\real_{\geq0}$ the set of positive integers, real, and
nonnegative real numbers, resp. We denote by $\mathbf{0}_{n}$ the
$n$-dimensional zero vector, and by $\mathbb{I}_{m}$ the $m\times m$
identity matrix. We write $\text{int}(\Sc)$ and $\partial\Sc$ for the
interior and the boundary of the set $\Sc$, resp. Given
$x\in\real^{n}$, $\norm{x}$ denotes the Euclidean norm of $x$. Given
$f:\real^{n}\to\real^{n}$, $g:\real^{n}\to\real^{n\times m}$ and a
smooth function $W:\real^{n}\to\real$, $L_{f}W:\real^{n}\to\real$
(resp. $L_{g}W:\real^{n}\to\real^{m}$) denotes the Lie derivative of
$W$ with respect to $f$ (resp. $g$), that is $L_{f}W=\nabla W^{T} f$
(resp. $\nabla W^{T} g$). We use $\Gc\Pc(\mu(x),K(x,x'))$ to denote a
Gaussian Process distribution with mean function $\mu(x)$ and
covariance function $K(x,x')$.  We denote by $\Cc^{l}(A)$ the set of
$l$-times continuously differentiable functions on an open set
$A\subseteq\real^{n}$. A function $\beta:\real_{\geq0}\to\real$ is of
class $\Kc$ if $\beta(0)=0$ and $\beta$ is strictly increasing. If
moreover $\lim_{t\to\infty}\beta(t)=\infty$, then $\beta$ is of class
$\Kc_{\infty}$. A function $V:\real^{n}\to\real$ is positive definite
if $V(0)=0$ and $V(x)>0$ for all $x\neq0$. $V$ is proper in a set
$\Gamma$ if the set $\setdef{x\in\Gamma}{V(x)\leq c}$ is compact for
any $c\geq0$. A set $\Cc\subseteq\real^{n}$ is forward invariant under
the dynamical system $\dot{x}=f(x)$ if any trajectory with initial
condition in $\Cc$ at time $t=0$ remains in $\Cc$ for all positive
times. A set $\Cc$ is safe for $\dot{x}=f(x,u)$ if there exists a
locally Lipschitz control $k:\real^{n}\to\real^{m}$ such that $\Cc$ is
forward invariant for $\dot{x}=f(x,k(x))$. Given $m\times n$ matrix
$A$ and two integers $i, j$ such that $1\leq i < j\leq n$, $A_{i:j}$
denotes the $m\times(j-i+1)$ matrix obtained by selecting the columns
from $i$ to $j$ of $A$, and $\sigma_{\max}(A)$ denotes the maximum
singular value of $A$. The image of $A$ is defined as
$\text{Im}(A)=\setdef{y\in\real^m}{\exists \ x\in\real^n \ \text{s.t.}
  \ y=Ax}$.  We denote by
$\Bc_{r}(p)=\setdef{y\in\real^n}{\norm{y-p}<r}$. Given
$A\in\real^{q\times n}$, $b\in\real^{q}$, $c\in\real^n$, $d\in\real$,
the inequality $\norm{Ax+b}\leq c^{T}x+d$ is a second-order cone
constraint (SOCC) in the variable $x\in\real^n$. A
function $f:\real^n\to\real^q$ is point-Lipschitz at a point
$x_0\in\real^n$ if there exists a neighborhood $\Vc$ of $x_0$ and a
constant $L>0$ such that $\norm{f(x)-f(x_0)}\leq L\norm{x-x_0}$ for
all $x\in\Vc$.

\subsection{Control Lyapunov and Barrier Functions}

Consider the control-affine system
\begin{align}\label{eq:control-affine-sys}
  \dot{x}=f(x)+g(x)u ,
\end{align}
where $f:\real^{n}\to\real^{n}$ and $g:\real^{n}\to\real^{n\times m}$
are locally Lipschitz functions, with $x\in\real^{n}$ the state and
$u\in\real^{m}$ the input.  We assume without loss of generality that
$f(\mathbf{0}_n)=\mathbf{0}_{n}$. 

\begin{definition}\longthmtitle{Control Lyapunov
    Function~\citep{EDS:98}}\label{def:clf}
  Given a set $\Gamma\subseteq\real^{n}$, with
  $\mathbf{0}_n\in\Gamma$, a continuously differentiable function
  $V:\real^{n}\to\real$ is a CLF on $\Gamma$ for the
  system~\eqref{eq:control-affine-sys} if it is proper in $\Gamma$,
  positive definite, and there exists a continuous positive definite
  function $W:\real^{n}\to\real$ such that, for each
  $x\in\Gamma\backslash \{ \mathbf{0}_n \}$, there exists a control $u\in\real^{m}$
  satisfying
  \begin{align}\label{eq:clf-ineq}
    L_fV(x)+L_gV(x)u \leq -W(x).
  \end{align}
\end{definition}
\smallskip

A Lipschitz controller $k:\real^{n}\to\real^{m}$ such that $u=k(x)$
satisfies~\eqref{eq:clf-ineq} for all
$x\in\Gamma\backslash\{ \mathbf{0}_n \}$ makes the origin of the
closed-loop system asymptotically stable~\citep{EDS:98}. Hence, CLFs
enable to guarantee asymptotic stability.

\begin{definition}\longthmtitle{Robust Control Barrier Function~\citep[Definition
    6]{MJ:18}}\label{def:robust-safety}
  Let $\Cc \subset \real^n$ and $h:\real^{n}\to\real$ be a continuously
  differentiable function such that
  \begin{subequations}\label{eq:safe-set}
    \begin{align}
      \Cc &=\setdef{x\in\real^n}{h(x)\geq0},
      \\
      \partial\Cc &=\setdef{x\in\real^n}{h(x)=0}.
    \end{align}
  \end{subequations}
  Given $\eta>0$, $h$ is an $\eta$-robust CBF if
  there exists a class $\Kc_{\infty}$ function $\alpha$ such that for
  all $x\in\Cc$, there exists $u\in\real^{m}$ with
  \begin{align}\label{eq:robust-safety}
    L_fh(x) + L_gh(x)u +\alpha(h(x)) \geq \eta.
  \end{align}
\end{definition}
\smallskip

When $\eta=0$, this definition reduces to the notion of
CBF~\citep[Definition 2]{ADA-SC-ME-GN-KS-PT:19}, and the inequality
reduces~to
\begin{align}\label{eq:cbf-ineq}
  L_fh(x) + L_gh(x)u +\alpha(h(x)) \geq 0 .
\end{align}
Note that all robust CBFs are CBFs.  A Lipschitz controller
$k:\real^{n}\to\real^{m}$ such that $u=k(x)$
satisfies~\eqref{eq:cbf-ineq} for all $x\in\Cc$ makes $\Cc$ forward
invariant~\citep[Theorem 2]{ADA-SC-ME-GN-KS-PT:19}. Hence, CBFs enable
to guarantee safety.

\begin{remark}\longthmtitle{Alternative CLF and CBF
    conditions}\label{rem:slack-clf}
  {\rm 
  Without loss of generality, if $V$ is a CLF on an open set $\Gamma$,
  we can assume that there exists a positive definite function $S$ such
  that, for all $x\in\Gamma$, there is $u\in\real^{m}$ with
  \begin{align}\label{eq:clf-slack}
    L_{f}V(x)+L_{g}V(x)u+W(x)\leq-S(x).
  \end{align}
  This is because if~\eqref{eq:clf-ineq} holds, we can always define
  $\tilde{W}(x):=\frac{1}{2}W(x)$ and let $\tilde{W}$ play the role of $W$ in~\eqref{eq:clf-slack} and take $S(x):=\frac{1}{2}W(x)$. Similarly, if $h$ is an $\eta$-robust
  CBF, then there exists a class $\Kc_{\infty}$ function $\zeta$ such
  that for all $x\in\Cc$, there is $u\in\real^{m}$ with
  \begin{align}\label{eq:cbf-slack}
    L_fh(x)+L_gh(x)u+\alpha(h(x))\geq \eta+\zeta(h(x)).   \demo
  \end{align}}
\end{remark}


\subsection{Robust and Probabilistic Safe
  Stabilization}\label{sec:rob-prob-safe-stab}

We are interested in the design of controllers that ensure stability
and safety in the presence of uncertainty.  We assume that the maps
$f$, $g$ in~\eqref{eq:control-affine-sys} and the CBF $h$ and its
gradient $\nabla h$ are unknown. We also assume that a CLF~$V$ for
the true system is unknown.  Instead, estimates of $f$, $g$, $h$,
$\nabla h$, $V$, and $\nabla V$ (denoted $\hat{f}$, $\hat{g}$,
$\hat{h}$, $\widehat{\nabla h}$, $\hat{V}$, and $\widehat{\nabla V}$
resp.) are available.

\begin{remark}\longthmtitle{Lyapunov function search under
    uncertainty}\label{re:exact-Lyapunov-uncertainty}
  {\rm We assume that $f$, $g$ and $h$ are only approximately known
    because, in practice, the dynamic model and safety constraints are
    often obtained using noisy sensor data and simplified models,
    which leads to estimation errors. The construction of CLFs for
    these approximations in turn leads to approximations of the CLF
    for the true system.  However, there are techniques to find CLFs
    for uncertain systems including sum-of-squares~\citep{AAA-AM:16},
    which is limited to polynomial systems but provides known error
    bounds,~\citep{AJT-VDD-HML-YY-ADA:19}, which describes a method
    that only requires knowledge of the degree of actuation,
    and~\citep{KL-YY-JC-NA:23-l4dc}, which uses ideas from
    distributionally robust optimization. All these works seek to find
    a CLF that is valid for all systems compatible with the given
    uncertainty. In our treatment, we only require $\hat{V}$ and
    $\widehat{\nabla V}$ to be within some error bounds of a true CLF
    and its gradient, respectively, but if a true CLF is known (by
    using for instance the techniques in the given references), these
    error bounds can be taken as identically zero. \demo}
\end{remark}

We consider two types of models for the errors between the estimates
and the true quantities. First, for $x\in\real^{n}$, consider
worst-case error bounds as follows:
\begin{align*}
  \norm{f(x)-\hat{f}(x)}\leq e_f(x), \ \norm{g(x)-\hat{g}(x)}\leq
  e_g(x),
  \\ 
  |h(x)-\hat{h}(x)| \leq e_h(x), \ \norm{\nabla
  h(x)-\widehat{\nabla h}(x)}\leq e_{\nabla h}(x),
  \\
  |V(x)-\hat{V}(x)| \leq e_V(x), \ \norm{\nabla V(x)-\widehat{\nabla
  V}(x)} \leq e_{\nabla V}(x). 
\end{align*}
Since the exact dynamics, the CBF and CLF are unknown, one can not certify the
inequalities~\eqref{eq:clf-ineq} and~\eqref{eq:cbf-ineq}
directly. Instead, using the error bounds above, define
\begin{align*}
  a_V(x) &=e_{\nabla V}(x)e_g(x) + e_{\nabla V}(x)\norm{\hat{g}(x)} +
           \norm{\widehat{\nabla V}(x)}e_g(x) ,
  \\ 
  b_V(x) &=-\widehat{\nabla V}(x)^T \hat{g}(x),
  \\
  c_V(x) &=-e_{\nabla V}(x)e_f(x)-e_{\nabla
           V}(x)\norm{\hat{f}(x)}-\norm{\widehat{\nabla V}(x)}e_f(x)
  \\
         &\quad  -\widehat{\nabla V}(x)^T \hat{f}(x) - W(x),
  \\
  a_h(x) &=e_{\nabla h}(x)e_g(x)+e_{\nabla
           h}(x)\norm{\hat{g}(x)}+\norm{\widehat{\nabla h}(x)}e_g(x),
  \\
  b_h(x) &=\widehat{\nabla h}(x)^T \hat{g}(x),
  \\
  c_h(x) &=-e_{\nabla h}(x) e_f(x)-e_{\nabla
           h}(x)\norm{\hat{f}(x)}-\norm{\widehat{\nabla h}(x)}e_f(x)
  \\
         & \quad \quad + \widehat{\nabla h}(x)^T
           \hat{f}(x)+\alpha(\hat{h}(x)-e_h(x)).
\end{align*}
According to~\cite[Proposition V.I]{KL-VD-ML-JC-NA:22-ral}, if the two
(state-dependent) SOCCs (in $u$):
\begin{subequations}\label{eq:socc-worst-case}
  \begin{align}
    a_V(x)\norm{u}
    & \leq
      b_V(x)u+c_V(x) , \label{eq:socc-clf-worst-case}
    \\
    a_h(x)\norm{u} & \leq b_h(x)u+c_h(x), \label{eq:socc-cbf-worst-case}
  \end{align}
\end{subequations}
are satisfied for all $x\in\Cc$, then~\eqref{eq:clf-ineq}
and~\eqref{eq:cbf-ineq} hold for all $x\in\Cc$. This result provides a
way of designing controllers that simultaneously
satisfy~\eqref{eq:clf-ineq} and~\eqref{eq:cbf-ineq}.

Second, suppose that GP estimates are available for the following
quantities~\citep{FC-JJC-BZ-CJT-KS:21}:
\begin{align*}
  \Delta_V(x,u)
  &=L_f V(x)+L_g V(x)u - \widehat{\nabla V}(x)^T (\hat{f}(x)+\hat{g}(x)u),
  \\
  \Delta_h(x,u)
  &= L_fh(x)+L_gh(x)u+\alpha(h(x)) -\widehat{\nabla
    h}(x)^T \hat{f}(x)
  \\
  &
    \quad -\widehat{\nabla h}(x)^T \hat{g}(x)u -\alpha(\hat{h}(x)).
\end{align*}
We further assume that if $\Hc$ is the Reproducing Kernel Hilbert
Space (RKHS,~\citep[Section 2.1]{NS-AK-SMK-MS:10-arxiv}) with respect
to which the GP estimates of $\Delta_{V}$ and $\Delta_{h}$ have been
derived, then $\Delta_{V}$ and $\Delta_{h}$ have bounded RKHS norm
with respect to $\Hc$.  Let $\mu_{V}(x,u)$ and $s^{2}_{V}(x,u)$ denote
the mean and variance, resp., of the GP prediction of $\Delta_{V}$,
which we assume affine and quadratic in $u$, resp. Therefore, there
exist $\gamma_{V}(x):\real^n\to\real^{m+1}$ and
$G_{V}(x) \in \real^{(m+1)\times(m+1)}$ such that
\begin{gather*}
  \mu_V(x,u)=\gamma_V(x)^T \begin{bmatrix} 1 \\ u \end{bmatrix}, \quad
  s_V(x,u)= \norm{G_V(x) \begin{bmatrix} 1 \\ u \end{bmatrix}}_2.
\end{gather*}
For the GP prediction of $\Delta_{h}$, let $\gamma_{h}(x)$, and
$G_{h}(x)$ be defined analogously.  Since the exact dynamics, the CBF and CLF
are unknown, one cannot certify the inequalities~\eqref{eq:clf-ineq}
and~\eqref{eq:cbf-ineq}. However, for $\delta\in(0,1)$, and using the
GP predictions, define
\begin{align*}
  Q_V(x) &=\beta(\delta) G_{V,2:(m+1)}(x)\in\real^{(m+1)\times m},
  \\
  r_V(x) &=\beta(\delta) G_{V,1}(x)\in\real^{(m+1)\times 1},
  \\
  b_V(x) &=-\widehat{\nabla
           V}(x)^T\hat{g}(x)-\gamma_{V,2:(m+1)}^T(x)\in\real^{1\times 
           m},
  \\
  c_V(x) &=-\widehat{\nabla V}(x)^T\hat{f}(x)-W(x)-\gamma_{V,1}(x)\in\real,
\end{align*}
and similarly $Q_{h}$, $r_{h}, b_{h}$ and $c_{h}$ (the exact form of
$\beta(\delta)$ is given
in~\citep[Theorem~2]{FC-JJC-BZ-CJT-KS:21-acc}). Then, according
to~\citep[Section~IV]{FC-JJC-BZ-CJT-KS:21}, if the two SOCCs
\begin{subequations}\label{eq:socc-gp}
  \begin{align}
    \norm{Q_V(x)u+r_V(x)} & \leq b_V(x)u+c_V(x),~\label{eq:socc-clf-gp}
    \\
    \norm{Q_h(x)u+r_h(x)} & \leq b_h(x)u+c_h(x),~\label{eq:socc-cbf-gp}
  \end{align}
\end{subequations}
are satisfied for all $x\in\Cc$, then~\eqref{eq:clf-ineq}
and~\eqref{eq:cbf-ineq} each hold for all $x\in\Cc$ with probability at
least $1-\delta$.

\begin{remark}\longthmtitle{General form of
    SOCCs}\label{rem:general-form-soccs}
  {\rm 
  By taking
  \begin{align*}
    Q_V(x)=a_V(x)\begin{pmatrix} \mathbb{I}_m
      \\
      \mathbf{0}_{m}^T \end{pmatrix}, \ r_V(x)=\textbf{0}_{m+1} 
  \end{align*}
  in~\eqref{eq:socc-clf-gp}, we
  obtain~\eqref{eq:socc-clf-worst-case}. Hence, in the following, we
  derive the results for SOCCs of the most general
  form~\eqref{eq:socc-gp}.  \demo }
\end{remark}

\section{Problem Statement}

We consider a control-affine system of the
form~\eqref{eq:control-affine-sys} and a safe set $\Cc$ of the
form~\eqref{eq:safe-set} with $f$, $g$, $h$, and $\nabla h$
unknown. We assume that $h$ is a CBF of $\Cc$ and
$\frac{\partial h}{\partial x}(x)\neq0$ for all $x\in\partial\Cc$.
By~\citep[Theorem 2]{ADA-SC-ME-GN-KS-PT:19}, this implies that $\Cc$
is safe. However, since the true $h$ is unknown, a safe controller is
not readily computable.  We further assume that $V$ is an unknown CLF
on an open set containing the origin.  We suppose that either
worst-case or probabilistic descriptions of the dynamics, the CBF and
the CLF are available, as described in
Section~\ref{sec:rob-prob-safe-stab}.

Given this setup, our goals are to (i) derive conditions that ensure
the feasibility of the pair of robust
stability~\eqref{eq:socc-clf-worst-case} and
safety-\eqref{eq:socc-cbf-worst-case} (resp., probabilistic
stability~\eqref{eq:socc-clf-gp} and safety~\eqref{eq:socc-cbf-gp})
inequalities and, building on this, (ii) design controllers that
jointly satisfy the inequalities pointwise in $\Cc$ and characterize
their regularity properties. The latter is motivated by both
theoretical (guarantee the existence and uniqueness of solutions to
the closed-loop system) and practical (ease of implementation of
feedback control on digital platforms and avoidance of chattering
behavior) considerations.

\section{Compatibility of Pairs of Second-Order Cone
  Constraints}\label{sec:compat}

In this section, we derive sufficient conditions that guarantee the
feasibility of the pairs of inequalities in~\eqref{eq:socc-worst-case}
and in~\eqref{eq:socc-gp}, resp. The next definition
extends the notion of compatibility given in~\citep[Definition 3]{PM-JC:23-csl}
to any set of inequalities.

\begin{definition}\longthmtitle{Compatibility of a set of
    inequalities}\label{def:compat-ineqs}
  Given functions $q_{i}:\real^{n}\times\real^{m}\to\real$ for
  $i\in\{1,\hdots,p\}$, the inequalities
  $q_{i}(x,u)\leq0$, $i\in\{1,\hdots,p\}$ are (strictly) compatible at
  a point $x\in\real^{n}$ if there exists a corresponding
  $u\in\real^{m}$ satisfying all inequalities (strictly). The same
  inequalities are (strictly) compatible on a set $\Gc$ if they are
  (strictly) compatible at every $x\in\Gc$.
\end{definition}

As the estimation errors (resp. the variances) approach zero, the inequalities in~\eqref{eq:socc-worst-case} (resp.~\eqref{eq:socc-gp}) approach~\eqref{eq:clf-ineq}
and~\eqref{eq:cbf-ineq}. If~\eqref{eq:clf-slack} and~\eqref{eq:cbf-slack} are
compatible, the next result provides explicit bounds for the
estimation errors such that~\eqref{eq:socc-clf-worst-case}-\eqref{eq:socc-cbf-worst-case}
and~\eqref{eq:socc-clf-gp}-\eqref{eq:socc-cbf-gp} are strictly
compatible.

\begin{proposition}\longthmtitle{Sufficient condition
      for compatibility  
  given upper bound on the norm of a safe stabilizing controller}\label{prop:small-errors}
Let $h$ be an $\eta$-robust CBF. Let $\tilde{\Cc}$ be a set containing
$\Cc$ such that~\eqref{eq:clf-slack} and~\eqref{eq:cbf-slack} are
compatible on $\tilde{\Cc}$. Let $B:\real^{n}\to\real$ be an upper
bound on the norm of a control satisfying both inequalities.  Suppose
$\alpha$ in~\eqref{eq:cbf-slack} is Lipschitz with constant
$K_{\alpha}$. Let $x\in\Cc$.
  \begin{enumerate}
  \item \label{it:compat-worst-case} If
    \begin{subequations}
    \begin{align}
      \notag
      & \norm{\widehat{\nabla V}(x)} (e_f(x)+e_g(x)B(x)) + e_{\nabla
        V}(x) \Big( \norm{\hat{f}(x)}+
      \\ 
      & e_f(x)+(\norm{\hat{g}(x)}+e_g(x))B(x) \Big)
        <\frac{1}{2}S(x),~\label{eq:worst-case-v} 
      \\ 
      \notag
      & (e_{\nabla h}(x)+\norm{\widehat{\nabla
        h}(x)})(e_f(x)+e_g(x)B(x))+K_{\alpha} e_h(x)
      \\
      & +e_{\nabla h}(x)(\norm{\hat{f}(x)}+\norm{\hat{g}(x)}B(x)) <
        \frac{1}{2}(\eta+\zeta(h(x))) ,~\label{eq:worst-case-h} 
    \end{align}
    \label{eq:small-errors-compat}
  \end{subequations}
  then~\eqref{eq:socc-clf-worst-case}
  and~\eqref{eq:socc-cbf-worst-case} are strictly compatible in a neighborhood of $x$;
\item\label{it:compat-prob} If
  \begin{subequations}\label{eq:small-vars}
    \begin{align}
      \sigma_{\max}(G_V(x))<\frac{S(x)}{2\beta(\delta)\sqrt{1+B^2(x)}},
      \label{eq:gp-v}
      \\ 
      \sigma_{\max}(G_h(x))
      <\frac{\eta+\zeta(h(x))}{2\beta(\delta)\sqrt{1+B^2(x)}},
      \label{eq:gp-h} 
    \end{align}
  \end{subequations}
  then~\eqref{eq:socc-clf-gp} and~\eqref{eq:socc-cbf-gp} are strictly
  compatible in a neighborhood of $x$ with probability at least $1-2\delta$.
  \end{enumerate}
\end{proposition}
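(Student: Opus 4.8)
The plan is to establish both parts by exhibiting, at each $x\in\Cc\setminus\{0\}$, a single control that \emph{strictly} satisfies both second-order cone constraints; in fact I claim the very control that witnesses compatibility of the slack inequalities~\eqref{eq:clf-slack}--\eqref{eq:cbf-slack} works. So fix $x\in\Cc\setminus\{0\}$ and choose $u\in\real^m$ with $\norm{u}\le B(x)$, $L_fV(x)+L_gV(x)u+W(x)\le -S(x)$, and $L_fh(x)+L_gh(x)u+\alpha(h(x))\ge\eta_h+\zeta(h(x))$, which exists by hypothesis. It then suffices to show this $u$ makes~\eqref{eq:socc-clf-worst-case}--\eqref{eq:socc-cbf-worst-case} (resp.~\eqref{eq:socc-clf-gp}--\eqref{eq:socc-cbf-gp}) hold strictly, and in the probabilistic case to control the probability of the events on which the GP bounds used in the estimates are valid.

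For part~\ref{it:compat-worst-case} (the worst-case SOCCs), I would lower bound the ``margin'' $b_V(x)u+c_V(x)-a_V(x)\norm{u}$ of~\eqref{eq:socc-clf-worst-case} at this $u$. Expanding the definitions of $a_V,b_V,c_V$, adding and subtracting the true Lie derivatives $L_fV(x)+L_gV(x)u$, and applying Cauchy--Schwarz together with $\norm{f-\hat f}\le e_f$ and $\norm{g-\hat g}\le e_g$ yields $b_Vu+c_V-a_V\norm{u}\ge -(L_fV+L_gVu+W)-2\norm{\nabla V}(e_f+e_g\norm{u})\ge S-2\norm{\nabla V}(e_f+e_gB)>0$ by~\eqref{eq:worst-case-v}. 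The argument for~\eqref{eq:socc-cbf-worst-case} is the same in spirit but heavier on bookkeeping: add and subtract $L_fh(x)+L_gh(x)u+\alpha(h(x))$, bound the unknown $\norm{\nabla h}$ by $\norm{\widehat{\nabla h}}+e_{\nabla h}$ and the unknown $\norm{f},\norm{g}$ by $\norm{\hat f}+e_f,\norm{\hat g}+e_g$, bound $\alpha(h(x))-\alpha(\hat h(x)-e_h(x))\le 2K_\alpha e_h(x)$ using $0\le h(x)-(\hat h(x)-e_h(x))\le 2e_h(x)$ and Lipschitzness of $\alpha$, and invoke the CBF slack inequality. Collecting terms gives $b_h(x)u+c_h(x)-a_h(x)\norm{u}\ge (\eta_h+\zeta(h(x)))-2\big(a_h(x)B(x)+d_h(x)+K_\alpha e_h(x)\big)$, where $d_h(x):=e_{\nabla h}(x)\norm{\hat f(x)}+\norm{\widehat{\nabla h}(x)}e_f(x)+e_{\nabla h}(x)e_f(x)$ collects the drift-error terms; a short computation checks that $a_h(x)B(x)+d_h(x)+K_\alpha e_h(x)$ is exactly the left-hand side of~\eqref{eq:worst-case-h}, so the bound is strictly positive.

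For part~\ref{it:compat-prob} (the probabilistic SOCCs), I would invoke the high-probability GP bound of~\cite[Theorem~2]{FC-JJC-BZ-CJT-KS:21-acc}: the event $E_V:=\{\,|\Delta_V(x,u)-\mu_V(x,u)|\le\beta(\delta)s_V(x,u)\ \text{for all }x\in\Cc,\ u\in\real^m\,\}$ has probability at least $1-\delta$, and analogously for $E_h$. On $E_V$, the margin of~\eqref{eq:socc-clf-gp} at our $u$ equals $b_V(x)u+c_V(x)-\norm{Q_V(x)u+r_V(x)}=-(L_{\hat f}V+L_{\hat g}Vu+W)-\mu_V(x,u)-\beta(\delta)s_V(x,u)$; substituting $L_{\hat f}V+L_{\hat g}Vu=L_fV+L_gVu-\Delta_V\le L_fV+L_gVu-\mu_V+\beta(\delta)s_V$ and $s_V(x,u)\le\sigma_{\max}(G_V(x))\sqrt{1+\norm{u}^2}\le\sigma_{\max}(G_V(x))\sqrt{1+B^2(x)}$, this is at least $S(x)-2\beta(\delta)\sigma_{\max}(G_V(x))\sqrt{1+B^2(x)}>0$ by~\eqref{eq:gp-v}. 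The identical computation on $E_h$, now using~\eqref{eq:gp-h}, shows~\eqref{eq:socc-cbf-gp} holds strictly. A union bound gives that $E_V\cap E_h$ has probability at least $1-2\delta$, and on this event the chosen $u$ strictly satisfies both SOCCs at every $x\in\Cc\setminus\{0\}$, so the pair is strictly compatible on $\Cc\setminus\{0\}$ with probability at least $1-2\delta$.

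The main obstacle is not conceptual but the careful bookkeeping in the CBF estimates of part~\ref{it:compat-worst-case}: one must track which quantities are known ($\hat f,\hat g,\widehat{\nabla h},\hat h$) versus unknown, bound each unknown in terms of its estimate plus the matching error, and verify that the accumulated constant coincides \emph{exactly} with the left-hand side of~\eqref{eq:worst-case-h}---in particular, the factor $\tfrac12$ on the right-hand sides of~\eqref{eq:worst-case-v}--\eqref{eq:worst-case-h} is precisely what absorbs the doubling introduced by the ``add and subtract the true Lie derivative'' step (and, for the barrier term, by the estimate $h-(\hat h-e_h)\le 2e_h$). A secondary but essential point in part~\ref{it:compat-prob} is that the GP confidence bound must hold uniformly over $(x,u)$, so that the single event $E_V\cap E_h$ certifies compatibility simultaneously at every $x$; this is exactly the form in which the bound is stated in the cited reference.
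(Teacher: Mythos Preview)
Your proposal is correct and follows essentially the same approach as the paper: for both parts you take the control $u^*$ witnessing the slack inequalities~\eqref{eq:clf-slack}--\eqref{eq:cbf-slack} with $\norm{u^*}\le B(x)$ and show it strictly satisfies the SOCCs, using the error bounds and triangle/Cauchy--Schwarz estimates in part~\ref{it:compat-worst-case} and the uniform GP confidence events plus a union bound in part~\ref{it:compat-prob}. In fact you supply considerably more detail for part~\ref{it:compat-worst-case} than the paper, which simply asserts that the result follows from the definitions of $e_f,e_g,e_h,e_{\nabla h}$; your bookkeeping (including the identification of $a_h B+d_h+K_\alpha e_h$ with the left-hand side of~\eqref{eq:worst-case-h} and the origin of the factor $\tfrac12$) checks out.
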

\begin{pf}
  \ref{it:compat-worst-case}) Since the
  inequalities~\eqref{eq:small-errors-compat} are strict, there exists
  a neighborhood $\Wc_{x}$ of $x$ such
  that~\eqref{eq:small-errors-compat} holds for all points in
  $\Wc_{x}$. The proof follows by applying the definition of
  $e_{f}, e_{g}, e_{h}, e_{\nabla h}$, $e_{\nabla V}$ given in
  Section~\ref{sec:rob-prob-safe-stab}.

  \ref{it:compat-prob}) First note that since the inequalities
  in~\eqref{eq:small-vars} are satisfied at $x$, there exists a
  neighborhood $\Wc_{x}$ of $x$ such that~\eqref{eq:small-vars} hold
  for all points in $\Wc_{x}$.  Note that~\eqref{eq:socc-clf-gp} can
  be equivalently written as
  \begin{align*}
    \beta(\delta)\norm{G_V(x)
    \begin{bmatrix} 1
      \\
      u\end{bmatrix}}_2
    &\leq
      -\widehat{\nabla V}(x)^T \hat{f}(x) - \gamma_{V,1}(x)-W(x)
    \\
    &\quad -(\widehat{\nabla V}(x)^T \hat{g}(x) + \gamma_{V,2:(m+1)}^T(x))u,
  \end{align*}
  and similarly for~\eqref{eq:socc-cbf-gp}.  Now, note that
  $-\widehat{\nabla V}(x)^T(\hat{f}(x)+\hat{g}(x)u) - \gamma_{V,1}(x)
  - \gamma_{V,2:(m+1)}^T(x) u = -L_fV(x)-L_gV(x)u+\Delta_V(x,u) -
  \gamma_{V,1}(x) - \gamma_{V,2:(m+1)}^T(x) u$, and similarly for the
  safety constraint.  Define then the events
  $\mathcal{E}_{V} = \{|\gamma_V(y)^T\begin{bmatrix}
    1\\u\end{bmatrix}-\Delta_V(y,u)|\leq\beta s_V(y,u), \newline
  \forall y\in\Wc_x, u\in\real^{m}\}$ and
  $\mathcal{E}_{h} = \{|\gamma_h(y)^T\begin{bmatrix}
    1\\u\end{bmatrix}-\Delta_h(y,u)|\leq\beta(\delta) s_h(y,u) , \,
  \forall \ y\in\Wc_x, u\in\real^{m}\}$. By~\citep[Theorem
  6]{NS-AK-SMK-MS:10-arxiv}, $\mathbb{P}(\mathcal{E}_V)\geq 1-\delta$
  and $\mathbb{P}(\mathcal{E}_h)\geq1-\delta$. Therefore,
  $\mathbb{P}(\mathcal{E}_V \cap \mathcal{E}_h) =
  \mathbb{P}(\mathcal{E}_V) +
  \mathbb{P}(\mathcal{E}_h)-\mathbb{P}(\mathcal{E}_V\cup\mathcal{E}_h)\geq
  1-2\delta$.
  Hence, if for all $y\in\Wc_{x}$ we can find $u\in\real^{m}$ satisfying
  \begin{subequations}
    \begin{align}
      L_fh(y) + L_gh(y)u + \alpha(h(y))
      & \geq
        2 \beta(\delta)
        s_h(y,u)
      ,
      \\
      -L_fV(y) - L_gV(y)u-W(y)
      & \geq
        2\beta(\delta)
        s_V(y,u)
      ,
    \end{align}
    \label{eq:compat-beta}
  \end{subequations}
  then~\eqref{eq:socc-clf-gp},~\eqref{eq:socc-cbf-gp} are compatible
  at $\Wc_{x}$ with probability at least $1-2\delta$. Let $u^{*}(x)$
  be a control satisfying~\eqref{eq:clf-slack}-\eqref{eq:cbf-slack}
  with $\norm{u^*(x)}\leq B(x)$. Let us show that $u^{*}(y)$
  satisfies~\eqref{eq:compat-beta} for all $y\in\Wc_{x}$. By using the
  characterization of the matrix norm induced by the Euclidean norm
  in~\citep[Example 5.6.6]{RAH-CRJ:12}, we get
  $\norm{G_V(y) \begin{bmatrix} 1\\u^*(y) \end{bmatrix}}_2\leq
  \sigma_{\max}(G_V(y)) \sqrt{1+B^2(y)}$ and similarly for the safety
  constraint. Using now~\eqref{eq:small-vars}, we deduce that
  $u^{*}(y)$ satisfies~\eqref{eq:compat-beta} for all $y\in\Wc_{x}$.
  $\blacksquare$
\end{pf}

\begin{remark}\longthmtitle{Tightness of conditions for
    SOCC compatibility}\label{rem:small-errors-vars}
  {\rm 
  The assumption that $h$ is an $\eta$-robust CBF makes it
  possible for~\eqref{eq:worst-case-h}
  and~\eqref{eq:gp-h} to be satisfied at $\partial\Cc$.
  If the estimation errors (resp. the variances $s_{V}^2$,
  $s_{h}^{2}$) are zero, then~\eqref{eq:small-errors-compat}
  (resp.~\eqref{eq:small-vars}) is trivially satisfied.  Larger values
  of $S(x)$ and $\zeta(h(x))$, and smaller values of $B(x)$, lead to
  conditions that are easier to satisfy. Closer to the origin, $S(x)$
  becomes smaller, thus making~\eqref{eq:worst-case-v}
  and~\eqref{eq:gp-v} harder to satisfy. In
  fact,~\eqref{eq:worst-case-v} and~\eqref{eq:gp-v} can only be
  satisfied near the origin if knowledge of $\nabla V$ is exact near
  it, cf. Remark~\ref{re:exact-Lyapunov-uncertainty}.  If
  $\zeta(h(x))$ is unknown, a known lower bound for it (e.g., $0$) can
  be used at the expense of more conservativeness.  \demo }
\end{remark}

\begin{remark}\longthmtitle{Computation of upper bound of safe
    stabilizing controller}\label{rem:upper-bound-form-csl}
  {\rm One can obtain~$B$ in Proposition~\ref{prop:small-errors} by
    relying on the expression for a safe stabilizing controller
    provided in~\citep{PM-JC:23-csl}, together with upper and lower
    bounds on the norms of $f$, $g$, $h$, $\nabla h$, $V$, and
    $\nabla V$.  \demo}
\end{remark}

We next provide a sufficient condition for the
  compatibility of~\eqref{eq:socc-gp} which does not require knowledge
  of an upper bound on the norm of a safe stabilizing controller. To
  do so, we first introduce some useful notation.
  Given~\eqref{eq:socc-gp}, define
  $\hat{A}:\real\to\real^{m\times m}$, $\hat{B}:\real\to\real^m$ by
  (note we have dropped the state-dependency in $x$ for brevity)
  \begin{align*}
    \hat{A}(\lambda)
    &:= 2(Q_V^T Q_V-b_V^T b_V) + 2\lambda (Q_h^T
      Q_h - b_h^T b_h),
    \\ 
    \hat{B}(\lambda)
    &:= 2(Q_V^T r_V-b_V^T c_V) + 2\lambda (Q_h^T
      r_h - b_h^T c_h), 
  \end{align*}
  and the set
  $\Fc_0 :=\setdef{\lambda
    \in\real}{\operatorname{det}(\hat{A}(\lambda))\neq0}$.  Let
  $A:\Fc_0 \to\real^{m\times m}$ and $d,\alpha_h,\alpha_V:\Fc_0\to\real$ be
  given by
  \begin{align*}
    A(\lambda)
    & := \hat{A}(\lambda)^{-1},
    \\
    d(\lambda)
    & :=b_h A(\lambda)b_h^T b_V A(\lambda)b_V^T-(b_h
      A(\lambda)b_V^T)^2,
    \\
    \alpha_h(\lambda)
    & := b_h A(\lambda)\hat{B}(\lambda)-c_h,
    \\
    \alpha_V(\lambda)
    &
      := b_V A(\lambda)\hat{B}(\lambda)-c_V.
  \end{align*} 
  Further let
  \begin{align*}
    \Fc_1
    &:=\setdef{\lambda\in\real}{\operatorname{det}(\hat{A}(\lambda))\neq0, \
      d(\lambda)\neq0},
  \\
    \Fc_2
    &:=\setdef{\lambda\in\real}{\operatorname{det}(\hat{A}(\lambda))\neq0, \
      b_V A(\lambda) b_V^T \neq0},
    \\
    \Fc_3
    &:=\setdef{\lambda\in\real}{\operatorname{det}(\hat{A}(\lambda))\neq0,
      \ b_h A(\lambda) b_h^T \neq0}, 
  \end{align*}
  and define $\lambda_{2,0}:\real\to\real$,
  $\{\lambda_{2,i}:\Fc_i\to\real\}_{i=1}^3$,
  $\lambda_{3,0}:\real\to\real$,
  $\{\lambda_{3,i}:\Fc_i\to\real\}_{i=1}^3$,
  and $u_i^*:\Fc_i\to\real^m$ for $i\in\{0,1,2,3\}$ as follows:
  \begin{align*}
    \lambda_{2,i}(\lambda)\!
    &:=\!\!
      \begin{cases}
        0 &\!\!\! \text{if } i=0,
        \\
        \frac{1}{d(\lambda)} b_V A(\lambda) (b_V^T
        \alpha_h(\lambda)-b_h^T \alpha_V(\lambda))
          &\!\!\! \text{if }
          i=1,
        \\
        0 &\!\!\! \text{if } i=2,
        \\
        \frac{\alpha_h(\lambda)}{b_h A(\lambda) b_h^T}
          &\!\!\! \text{if
            } i=3,
      \end{cases}
    \\
    \lambda_{3,i}(\lambda)\!
    &:=\!\!
      \begin{cases}
        0 &\!\!\! \text{if } i=0,
        \\
        \frac{1}{d(\lambda)}(-b_V \alpha_h(\lambda)+b_h
        \alpha_V(\lambda))A(\lambda) b_h^T &\!\!\! \text{if } i=1,
        \\
        \frac{\alpha_V(\lambda)}{b_V A(\lambda) b_V^T} &\!\!\! \text{if } i=2,
        \\
        0 &\!\!\! \text{if } i=3,
      \end{cases}
    \\
    u_i^*(\lambda)
    &
      :=A(\lambda)(\lambda_{2,i}(\lambda)b_h^T +
      \lambda_{3,i}(\lambda)b_V^T-\hat{B}(\lambda)).
  \end{align*}
  We are now ready to state the result.

\begin{proposition}\longthmtitle{Sufficient condition for
    compatibility without knowledge of upper bound on the norm of a
    safe stabilizing controller}\label{prop:nec-suff-cond-compat}
  Let the functions $g_h,g_V:\real^m\to\real$ be given by
  {\small \vspace*{-5ex}
  \begin{align*}
    g_h(u)
    \! & = \! u^T(Q_h^TQ_h \! - \! b_h^T b_h)u \! + \! 2(r_h^T
          Q_h \! - \! b_h c_h)u \!
          + \! \norm{r_h}^2 \! - \! c_h^2,
    \\
    g_V(u) \!
        &= \! u^T(Q_V^T Q_V \! - \! b_V^T b_V)u \! + \! 2(r_V^T
          Q_V \! - \! b_V c_V)u \!
          + \! \norm{r_V}^2 \! - \! c_V^2,
  \end{align*}
} and define the functions $\{\eta_i:\Fc_i\to\real\}_{i=0}^3$ by
$\eta_i(\lambda)=\lambda g_h(u_i^*(\lambda))$.  Further consider the
constraints
\begin{align}\label{eq:opt-pb-licq}
  g_h(u)\leq0, \quad -b_h u -c_h \leq 0, \quad -b_V u -c_V \leq 0.
\end{align}
Then,~\eqref{eq:socc-gp} are compatible if there is
$i\in\{0,1,2,3\}$ such that there exists a non-negative root
$\lambda_i^*\in\Fc_i$ of $\eta_i$ such that
$\lambda_{2,i}(\lambda_i^*)\geq0$,
$\lambda_{3,i}(\lambda_i^*)\geq0$,
$g_V(u_i^*(\lambda_i^*))\leq0$, $g_h(u_i^*(\lambda_i^*))$,
the constraints in~\eqref{eq:opt-pb-licq} at $u_i^*(\lambda_i^*)$
are satisfied, and the gradients of the active constraints
in~\eqref{eq:opt-pb-licq} are linearly independent.
\end{proposition}
\begin{pf}
  Let
  \begin{align}\label{eq:proof-nec-suf-cond-pb}
    \sigma
    &:=\min_{u\in\real^m} g_V(u)
    \\
    \notag
    &\quad \text{s.t.} \ g_h(u)\leq0, \quad -b_h u -c_h \leq0, \quad
      -b_V u -c_V \leq 0. 
  \end{align}
  By~\citep{FC-JJC-BZ-CJT-KS:21},~\eqref{eq:socc-gp} are compatible if
  and only if $\sigma\leq0$.  The result now follows by applying the
  KKT conditions to Problem~\eqref{eq:proof-nec-suf-cond-pb}.  The
  condition that the gradients of the active constraints
  in~\eqref{eq:opt-pb-licq} are linearly independent
  guarantees that Linear Independence Constraint Qualification (cf.~\cite[Definition
  2.4]{GS:18}) holds at the optimizer
  of~\eqref{eq:proof-nec-suf-cond-pb}. Hence, the optimizer
  of~\eqref{eq:proof-nec-suf-cond-pb} satisfies the KKT conditions
  of~\eqref{eq:proof-nec-suf-cond-pb}, cf.~\cite[Theorem
  5.33]{NA-AE-MP:20}.  Let then
  $\Lc(u,\lambda_1,\lambda_2,\lambda_3)=g_V(u)+\lambda_1 g_h(u) +
  \lambda_2(-b_h u-c_h)+\lambda_3(-b_Vu-c_V)$ be the Lagrangian
  of~\eqref{eq:proof-nec-suf-cond-pb}.  The stationarity condition
  $\nabla_u \Lc(u,\lambda_1,\lambda_2,\lambda_3)=0$ implies that any
  solution $u^*, \lambda_1^*, \lambda_2^*, \lambda_3^*$ of the KKT
  conditions with $\lambda_1^*\in\Fc_0$ satisfies
  \begin{align*}
    u^* = A(\lambda_1^*)(\lambda_2^* b_h^T + \lambda_3^* b_V^T
    -\hat{B}(\lambda_1^*) ). 
  \end{align*}
  Now the four different cases in the statement arise by applying the
  rest of the KKT conditions depending on whether the constraints
  $-b_h u-c_h \leq 0$, $-b_V u-c_V\leq0$ are active at the optimizer.
  The case $i=0$ corresponds to both constraints being inactive, the
  case $i=1$ to both constraints being active, the case
  $i=2$ to only the constraint $-b_V u-c_V\leq0$ being active, and $i=3$ to 
  only the constraint $-b_h u -c_h\leq 0$ being active.
  $\blacksquare$
\end{pf}

\begin{remark}\longthmtitle{Applicability of
    Proposition~\ref{prop:nec-suff-cond-compat}}\label{rem:applicability-second-suff-cond}
  {\rm
    Although the problem of knowing whether a nonlinear equation
    has any roots is undecidable in general, cf.~\citep{PSW:74}, if
    a root satisfying either of the specific conditions in
    Proposition~\ref{prop:nec-suff-cond-compat} can be rapidly
    found, this result provides a quick test for the compatibility
    of the two SOCCs in~\eqref{eq:socc-gp}.  A simple setting in
    which this holds is the following.  Recall that $\eta_1$ is a
    function of $x$
    and suppose that a root $\lambda_{x_0}^*$ of $\eta_1$ has been
    found at a point $x_0$. Moreover, suppose that the inequalities
    $\lambda_{2,1}(\lambda_{x_0}^*) > 0$,
    $\lambda_{3,1}(\lambda_{x_0}^*)>0$, $g_V(u_1^*(\lambda_{x_0}^*))<0$
    and $g_h(u_1^*(\lambda_{x_0}^*)) < 0$ are satisfied strictly.
    Then, under the assumptions of the Implicit Function
    Theorem~\cite[Theorem 2-12]{MS:95}, there exists a neighborhood
    $\Vc$ of $x_0$ such that for all $x\in\Vc$, there exists a root
    $\lambda_{x}^*$ of $\eta_1$ that is close to $\lambda_{x_0}^*$.
    Therefore, we can limit the search of the root to a neighborhood
    of $\lambda_{x_0}^*$ and we should expect to find a solution
    satisfying the conditions in
    Proposition~\ref{prop:nec-suff-cond-compat} fast. Analogous
    observations are valid for $i\in\{0,2,3\}$. }
  \demo
\end{remark}

\begin{remark}\longthmtitle{Necessity of
    Proposition~\ref{prop:nec-suff-cond-compat}}\label{rem:nec-of-second-suff-cond}
  {\rm Proposition~\ref{prop:nec-suff-cond-compat} is close to being
    a necessary and sufficient condition for compatibility. The gap
    arises from not including the cases where
    $\lambda_i^*\notin\Fc_i$ for $i \in \{0,1,2,3\}$
    or where the gradients of the active constraints
    in~\eqref{eq:opt-pb-licq} at the optimizer
    of~\eqref{eq:proof-nec-suf-cond-pb} are linearly dependent.  In
    these cases, a condition that ensures compatibility of the SOCCs
    can still be given on the basis of the KKT conditions
    of~\eqref{eq:proof-nec-suf-cond-pb}, but its statement becomes
    quite involved and we have not included it in
    Proposition~\ref{prop:nec-suff-cond-compat} for simplicity.  }
  \demo
\end{remark}


\begin{remark}\longthmtitle{Practical significance of sufficient
    conditions}\label{re:practical-app}
  {\rm Propositions~\ref{prop:small-errors}
    and~\ref{prop:nec-suff-cond-compat} are complementary to each
    other.  Proposition~\ref{prop:small-errors} requires the knowledge
    of the upper bound $B$, but is computationally cheap.
    Proposition~\ref{prop:nec-suff-cond-compat} requires less
    restrictive assumptions but involves finding a root of a nonlinear
    scalar equation, which can be more computationally expensive.
    Their practical usage is threefold.  First, if they are not met
    (which does not mean that the corresponding pair of SOCCs is not
    compatible), this can be taken as an indication that the estimates
    of the dynamics, CLF, and CBF need to be improved.  Therefore,
    Propositions~\ref{prop:small-errors}
    and~\ref{prop:nec-suff-cond-compat} pave the way for the design of
    active learning strategies that leverage them to decide when more
    data needs to be collected.  Second, these sufficient conditions
    can be used to identify the regions of the state space where
    compatibility might fail, and design control strategies that avoid
    them in order to guarantee recursive feasibility.  Third, given
    that in general, state-of-the-art SOCP solvers provide
    infeasibility and optimality certificates with the same time
    complexity, cf.~\citep[Section A]{AD-EC-SB:13}, our sufficient
    conditions can be used before solving the SOCP to save computation
    time in the case where the problem is unfeasible.  This latter
    point is illustrated in more detail in our simulations below,
    cf. Section~\ref{sec:sims}.  \demo }
\end{remark}

\section{Design and Regularity Analysis of Controllers Satisfying
  SOCCs
}\label{sec:regularity}

In this section, we study the existence and regularity properties of
controllers satisfying sets of SOCCs.
Our first result establishes that, if a set of state-dependent SOCCs
are strictly compatible, then there exists a smooth controller
satisfying all of them simultaneously. 

\begin{proposition}\longthmtitle{Existence of a smooth controller
    satisfying a finite number of SOCCs}\label{prop:existence-soccs}
  For $i\in\{1,\hdots,p\}$, let
  $Q_i:\real^n\to\real^{(m+1)\times m}, r_{i}:\real^{n}\to\real^{m+1},
  b_{i}:\real^{n}\to\real^m, c_{i}:\real^{n}\to\real$ be continuous
  functions on an open set~$\Gc \subset \real^n$.  If the $p$ SOCC
  inequalities $\norm{Q_i(x)u+r_i(x)}\leq b_{i}(x)u+c_{i}$,
  $i\in\{1,\hdots,p\}$, are strictly compatible on~$\Gc$, then there
  exists a $\Cc^{\infty}(\Gc)$ function $k:\Gc\to\real^{m}$ such that
  $\norm{Q_i(x)k(x)+r_i(x)}\leq b_{i}(x)k(x)+c_{i}(x)$ for all
  $i\in\{1,\hdots,p\}$ and all $x\in\Gc$.
\end{proposition}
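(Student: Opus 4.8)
The plan is to build $k$ by patching together \emph{constant} controls with a smooth partition of unity, exploiting the convexity of each strict SOCC region. The key structural observation is that, for each fixed $x\in\Gc$, the set
\[
U(x):=\setdef{u\in\real^m}{\norm{Q_i(x)u+r_i(x)}< b_i(x)u+c_i(x)\ \text{ for all } i\in\{1,\hdots,p\}}
\]
is open and convex: the function $u\mapsto \norm{Q_i(x)u+r_i(x)}-b_i(x)u-c_i(x)$ is convex (norm of an affine map minus an affine function), so its strict sublevel set $\{<0\}$ is convex, and intersecting over $i$ preserves convexity and openness. Strict compatibility on $\Gc$ says precisely that $U(x)\neq\emptyset$ for every $x\in\Gc$.

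Next I would localize. Fix $x_0\in\Gc$ and choose $u_0\in U(x_0)$. Since $Q_i,r_i,b_i,c_i$ are continuous, each map $x\mapsto \norm{Q_i(x)u_0+r_i(x)}-b_i(x)u_0-c_i(x)$ is continuous and strictly negative at $x_0$, hence strictly negative on an open neighborhood of $x_0$; intersecting these finitely many neighborhoods yields an open set $N_{x_0}\subseteq\Gc$ on which the \emph{constant} control $u\equiv u_0$ satisfies all $p$ strict SOCCs. The family $\{N_{x_0}\}_{x_0\in\Gc}$ is an open cover of $\Gc$. Because $\Gc$ is an open subset of $\real^n$, it is paracompact and admits $\Cc^{\infty}$ partitions of unity; I would therefore take a locally finite refinement, indexed by $j$, with an associated point $x_j$ and constant control $u_j$ such that each refining set lies in $N_{x_j}$, together with a smooth partition of unity $\{\psi_j\}$ subordinate to it, so that $\operatorname{supp}\psi_j\subseteq N_{x_j}$, $\psi_j\geq 0$, and $\sum_j\psi_j\equiv 1$ with locally finite sum. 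Define $k(x):=\sum_j \psi_j(x)\,u_j$; locally this is a finite sum of $\Cc^{\infty}$ functions times constant vectors, so $k\in\Cc^{\infty}(\Gc)$.

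Finally I would check feasibility pointwise. Fix $x\in\Gc$. For every index $j$ with $\psi_j(x)\neq 0$ we have $x\in\operatorname{supp}\psi_j\subseteq N_{x_j}$, hence $u_j\in U(x)$. Therefore $k(x)$ is a convex combination (nonnegative weights $\psi_j(x)$ summing to one) of points of the convex set $U(x)$, so $k(x)\in U(x)$, which in particular yields $\norm{Q_i(x)k(x)+r_i(x)}\leq b_i(x)k(x)+c_i(x)$ for all $i\in\{1,\hdots,p\}$, as desired. I expect the only delicate points to be the bookkeeping around the partition of unity --- that an open subset of $\real^n$ admits $\Cc^{\infty}$ partitions of unity subordinate to any open cover, and that $\psi_j(x)\neq 0$ forces $u_j\in U(x)$, which is exactly what legitimizes the convex-combination step --- and noting that it is \emph{strict} compatibility, not mere compatibility, that provides the open neighborhoods on which a fixed control stays feasible; this is precisely why the hypothesis is stated with strict inequalities.
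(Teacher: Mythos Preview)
Your argument is correct and is essentially the approach the paper has in mind: it omits the proof, citing that SOCCs define convex sets and that the argument is identical to \cite[Proposition~3.1]{PO-JC:19-cdc}, which is precisely the convexity-plus-partition-of-unity construction you spelled out. Your write-up makes explicit the two ingredients the paper alludes to---convexity of each $U(x)$ so that convex combinations via the partition of unity stay feasible, and strict compatibility together with continuity to obtain the open neighborhoods on which a fixed constant control works.
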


This result is an extension of~\citep[Proposition~4.2.1]{PO:21} to a
finite set of SOCCs. Since SOCCs define convex sets, the proof follows
an identical argument and we omit it for space reasons.  The
combination of Propositions~\ref{prop:small-errors}
and~\ref{prop:existence-soccs} guarantees the smooth safe
stabilization of~\eqref{eq:control-affine-sys} under either worst-case
or probabilistic uncertainty.

\begin{corollary}\longthmtitle{Smooth safe
    stabilization under uncertainty}\label{cor:smooth-safe-stab}
  Let $\tilde{\Cc}$ be a neighborhood of $\Cc$, $h$ be an
  $\eta$-robust CBF, and assume~\eqref{eq:clf-slack}
  and~\eqref{eq:cbf-slack} are compatible on $\tilde{\Cc}$. Let $\Vc$
  be a neighborhood of the origin and $\tilde{\Vc}$ be the smallest
  sublevel set of $V$ containing $\Vc$.
    \begin{enumerate}
    \item\label{cor-smooth:first} \longthmtitle{Local smooth safe
        control} Suppose that~\eqref{eq:small-errors-compat}
      (resp.~\eqref{eq:small-vars}) holds at $x_0\in\Cc\backslash\Vc$
      and~\eqref{eq:socc-worst-case} (resp.~\eqref{eq:socc-gp}) is
      continuous at $x_{0}$. Then, there exists a neighborhood
      $\Wc_{x_0}$ of $x_{0}$, a smooth controller
      $k_{x_0}:\Wc_{x_0}\to\real^{m}$, and a time $t_{x_0}>0$ such
      that the flow map of $\dot{x}=f(x)+g(x)k_{x_0}(x)$, denoted by
      $\Psi_{t}(x)$, is such that $\Psi_{t}(x_0)\in\Cc$ and
      $\frac{d}{dt}V(\Psi_{t}(x_0))<0$ for all $t\in[0,t_{x_0})$
      (resp. with probability at least $1-2\delta$);
    \item\label{cor-smooth:second} \longthmtitle{Global smooth safe
        stabilization} Let $\hat{\Cc}$ be open with
      $\Cc\subseteq\hat{\Cc}\subseteq\tilde{\Cc}$.
      If~\eqref{eq:small-errors-compat} (resp.~\eqref{eq:small-vars})
      holds for all $x\in\hat{\Cc}\backslash\Vc$
      and~\eqref{eq:socc-worst-case} (resp.~\eqref{eq:socc-gp}) is
      continuous on $\hat{\Cc}\backslash\Vc$, then there exists a
      smooth controller
      $k:\text{int}( \hat{\Cc}\backslash\Vc ) \to \real^{m}$ such that
      all trajectories of $\dot{x}=f(x)+g(x)k(x)$ starting at $\Cc$
      remain in $\Cc$ and asymptotically converge to $\tilde{\Vc}$
      (resp. with probability at least $1-2\delta$);
  \end{enumerate}
\end{corollary}

\begin{remark}\longthmtitle{Asymptotic stability}
  {\rm If conditions~\eqref{eq:small-errors-compat}
    and~\eqref{eq:small-vars} hold for all points in
    $\tilde{\Cc}\backslash\{0\}$ (not only for all points in
    $\tilde{\Cc}\backslash\Vc$), then
    Corollary~\ref{cor:smooth-safe-stab}(ii) implies that the origin
    is asymptotically stable. This can only be the case if knowledge
    of $\nabla V$ near the origin is exact,
    cf. Remark~\ref{re:exact-Lyapunov-uncertainty}.  \demo}
\end{remark}

Note that the set $\Cc$ is unknown and hence checking the
conditions~\eqref{eq:small-errors-compat} and~\eqref{eq:small-vars}
for all $x\in\Cc\backslash\Vc$ may not be practical. This is the
reason why we introduce the set $\hat{\Cc}$ in
Corollary~\ref{cor:smooth-safe-stab}(ii).

Corollary~\ref{cor:smooth-safe-stab} establishes the existence of a
smooth safe stabilizing controller under uncertainty, but does not
provide an explicit closed-form design that can be used for
implementation. In what follows, we provide controller designs that
are explicit but have weaker regularity properties.
Let
\begin{align}\label{eq:st-sf-socp}
  u^*(x)&=\argmin{u\in\real^m}{\frac{1}{2}\norm{u}^2},
  \\
  \notag
  \textrm{s.t.} &\quad \norm{Q_i(x)u+r_i(x)}\leq b_i(x)u + c_i(x), \
                  i\in\{1,\hdots,p\}. 
\end{align}
Note that this program can be written as a second-order convex program
(SOCP), as shown in~\citep[Section 2.2]{FA-DG:03}.
%
%
If the constraints in~\eqref{eq:st-sf-socp} are
either~\eqref{eq:socc-worst-case} or~\eqref{eq:socc-gp}, we refer
to~\eqref{eq:st-sf-socp} as CLF-CBF-SOCP. The following result
establishes different conditions under which $u^{*}$ is point-Lipschitz and
locally Lipschitz.

\begin{proposition}\longthmtitle{Lipschitzness of SOCP
    solution}\label{prop:lipschitz-clf-cbf-socp} 
  Let $\{Q_i, r_i, b_{i},c_{i}\}_{i=1}^p$ be twice continuously
  differentiable at a point $x\in\real^n$ and assume the constraints
  in~\eqref{eq:st-sf-socp} are strictly compatible at $x$.
  Then $u^{*}$ is point-Lipschitz at $x$.  Further,
    for $i \in \until{p}$, let
  \begin{align*}
    g_i(x,u)
    & =\norm{Q_i(x)u+r_i(x)}-b_i(x)u-c_i(x),
    \\
    g_{i,1}(x,u)
    & = u^T (Q_i(x)^T Q_i(x)-b_i(x) b_i(x)^T)u+r_i(x)^2
    \\
    &\quad + 2(Q_i(x)^T r_i(x)-c_i(x) b_i(x))^T
      u-c_i(x)^2,
    \\
    g_{i,2}(x,u)& =-b_i(x)^T u-c_i(x),
  \end{align*}
  and define
  \begin{align*}
    \Ac(x) & := \setdef{i\in[p]}{\norm{Q_i(x)u^*(x)+r_i(x)}\neq 0, \
             g_i(x)\!=\!0},
    \\ 
    \Ac_1(x) & := \setdef{i\in[p]}{\norm{Q_i(x)u^*(x)+r_i(x)}= 0, \
               g_{i,1}(x)\!=\!0},
    \\ 
    \Ac_2(x) & := \setdef{i\in[p]}{\norm{Q_i(x)u^*(x)+r_i(x)}= 0, \
               g_{i,2}(x)\!=\!0}.
  \end{align*}
  Suppose that the vectors
  \begin{multline}\label{eq:G-G1-G2}
    \{ \nabla_u g_i(x,u^*(x)) \}_{i\in\Ac(x)} \cup \{ \nabla_u
    g_{i,1}(x,u^*(x)) \}_{i\in\Ac_1(x)}
    \\
    \cup \{ \nabla_u
    g_{i,2}(x,u^*(x)) \}_{i\in\Ac_2(x)} 
  \end{multline}
  are linearly independent.  Then, $u^*$ is locally Lipschitz
  at~$x$.
\end{proposition}
\begin{pf}
  First consider the points $x\in\Gc$ where
  $\norm{Q_i(x)u^*(x)+r_i(x)}\neq0$ for all $i\in[p]$. At these
  points, the constraints of~\eqref{eq:st-sf-socp} are twice
  continuously differentiable in $x$ and $u$ in a neighborhood of the
  optimizer. Moreover, since the constraints in~\eqref{eq:st-sf-socp}
  are strictly compatible, for any $\epsilon>0$ there exists
  $\hat{u}_{\epsilon}^{x}$ satisfying them strictly and such that
  $\norm{u^*(x)-\hat{u}_{\epsilon}^x}<\epsilon$.
  %
  %
  Since none of the
  constraints are active at $\hat{u}_{\epsilon}^x$, the
  Mangasarian-Fromovitz Constraint Qualification (MFCQ) holds at
  $\hat{u}_{\epsilon}^x$. By~\citep[Lemma 6.1]{GS:18} this implies that MFCQ
  also holds at $u^{*}(x)$. Furthermore, since the objective function
  in~\eqref{eq:st-sf-socp} is strongly convex and the constraints are
  convex, the second-order condition (SOC2)~\citep[Definition
  6.1]{GS:18} holds and by~\citep[Theorem 6.4]{GS:18}, $u^{*}$ is
  point-Lipschitz at $x$.  Next, consider any point $x\in\Gc$
  where 
  $\Ic_{x}=\setdef{i\in\{1,\hdots,p\}}{\norm{Q_i(x)u^*(x)+r_i(x)}=0}$
  is nonempty. Since the constraint
  $\norm{Q_i(x)u+r_i(x)}\leq b_{i}(x)u+c_{i}(x)$ is not differentiable
  at those points, we square the SOCCs
  in~\eqref{eq:st-sf-socp} associated to $\Ic_{x}$ to obtain the
  equivalent formulation with twice-continuously differentiable
  constraints:
  \begin{align}\label{eq:squared-st-sf-socp}
    u^*(x)& =\argmin{u\in\real^m}{\frac{1}{2}\norm{u}^2},
    \\
    \notag
    \textrm{s.t.} \quad & g_{i,1}(x,u)\leq0, \ g_{i,2}(x,u)\leq0, \
                          i\in\Ic_{x},
    \\ 
    \notag
    \quad & \norm{Q_i(x)u+r_i(x)}\leq b_i(x)u+c_i(x), \ i\in\{1,\hdots,p\}\backslash\Ic_{x},
  \end{align}
  Strict compatibility of the constraints in~\eqref{eq:st-sf-socp}
  implies the strict compatibility of the constraints
  in~\eqref{eq:squared-st-sf-socp} and, by the same argument as
  before, MFCQ holds at the optimizer. To show that SOC2 also holds
  for~\eqref{eq:squared-st-sf-socp}, note that the constraints
  $g_{i,1}(x,u)\leq0$ for $i\in\Ic_{x}$ cannot be active at the
  optimizer (otherwise, that would imply that
  $b_{i}(x)u^{*}(x)+c_{i}(x)=0$, implying that MFCQ is violated at the
  optimizer, reaching a contradiction). Thus, by the strict
  complementarity condition, the Lagrange multipliers
  associated with the constraints $g_{i,1}(x,u), i\in\Ic_{x}$ are zero
  and the Hessian of the Lagrangian $\Lc$
  of~\eqref{eq:squared-st-sf-socp} at the optimizer takes the form
  \begin{align*}
    &\nabla_u^2 \Lc(u^*, \{ \lambda_{i} \}_{i\in \Ac})_{x} = I_m +
    \sum_{i\in \Ac(x)}  {\lambda_{i}(x)} \nabla_u^2
    g_{i}(x,u^*(x)),
  \end{align*}
where $\lambda_{i}$ is the Lagrange multiplier associated with the
constraint $g_i(x,u)\leq0$ for $i\not\in\Ic_{x}$. Since
$\norm{Q_i(x)u^*(x)+r_i(x)}\neq0$ for the active constraints, their
Hessian is well-defined and is positive semidefinite due to their
convexity, making
$\nabla_u^2 \Lc(u^*, \{\lambda_{i}\}_{i\in J_0})_{x}$ positive
definite. Hence, SOC2 holds for~\eqref{eq:squared-st-sf-socp} at the
optimizer and, by~\citep[Theorem 6.4]{GS:18}, $u^{*}$ is
  point-Lipschitz at~$x$.
  Moreover, the assumption that
  the vectors in~\eqref{eq:G-G1-G2}
  are linearly
  independent implies that the gradients of the active constraints are
  linearly independent. By the same argument used to show that
  the SOC2 condition holds, the strong second-order sufficient condition
  also holds. This shows by~\citep[Theorem 4.1]{SMR:80} that $u^*$ is
  \textit{strongly regular} at $x$, which by~\citep[Corollary
  2.1]{SMR:80} implies that $u^*$ is locally Lipschitz at $x$.
  $\blacksquare$
\end{pf}

  Note that the
  reformulation~\eqref{eq:squared-st-sf-socp} in the proof of
  Proposition~\ref{prop:lipschitz-clf-cbf-socp} by squaring the
  constraints is done purely for analysis purposes and does not
  have to be done in practice when solving~\eqref{eq:st-sf-socp}.

  \begin{remark}\longthmtitle{Not-locally Lipschitz example without independence
      of gradients}\label{rem:non-lipschitz-licq} {\rm
      \cite{SMR:82} introduces an example of a
      parametric quadratic program with strongly convex objective
      function, smooth objective function and constraints, and for
      which Slater's condition holds for all values of the
      parameter. Moreover, the parametric optimizer of this problem is
      shown to be not locally Lipschitz.  Since the parametric QP presented by
      Robinson is a particular case of~\eqref{eq:st-sf-socp}, it also
      provides an example as to why the extra condition on the
      set~\eqref{eq:G-G1-G2}
      being linearly independent is necessary in order to guarantee
      local Lipschitzness of $u^*$.  The recent
      note~\citep{PM-AA-JC:23-scl} explores in detail the regularity
      properties of parametric optimization problems satisfying
      conditions similar to those of Robinson's counterexample.  }
    \demo
\end{remark}

As a consequence of Proposition~\ref{prop:lipschitz-clf-cbf-socp}, we
conclude that if the estimates $\hat{f}$, $\hat{g}$, $\hat{h}$,
$\widehat{\nabla h}$, $\hat{V}$, $\widehat{\nabla V}$ and worst-case
error bounds (resp. means and variances) that appear
in~\eqref{eq:socc-worst-case} (resp.~\eqref{eq:socc-gp}) are twice
continuously differentiable and the
conditions~\eqref{eq:small-errors-compat}
(resp.~\eqref{eq:small-vars}) hold, then the corresponding
CLF-CBF-SOCP controller is point-Lipschitz.
We also note that the condition that the vectors 
in~\eqref{eq:G-G1-G2}
are linearly independent corresponds to the Linear Independence
Constraint Qualification (LICQ)~\cite[Definition 2.4]{GS:18} for
problem~\eqref{eq:squared-st-sf-socp}.

Next we provide a formula, inspired by Sontag's universal
formula~\citep{EDS:89a}, for a smooth controller satisfying a single
SOCC defined by smooth functions.

\begin{proposition}\longthmtitle{Universal formula for a controller
    satisfying one SOCC}\label{prop:sontag-soccs}
  Let $l \in \mathbb{Z}_{>0}$ and assume
  $Q:\real^{n}\to\real^{(m+1)\times m}$, $r:\real^n\to\real^{m+1}$,
  $b:\real^n\to\real^m$, and $c:\real^n\to\real$ are $l$-continuously
  differentiable on an open set $\Gc\subseteq\real^{n}$.  Suppose that
  the SOCC $\norm{Q(x)u+r(x)}\leq b(x)u+c(x)$ is strictly feasible on
  $\Gc$ and $Q(x)^{T}Q(x)$ is invertible for all $x\in\Gc$. Let
  $\tilde{b}(x) = b(x)(Q^T(x)Q(x))^{-1} Q^{T}(x)$,
  $\tilde{c}(x) = c(x) - \tilde{b}(x) r(x)$,
  $\bar{b}(x) = (\norm{\tilde{b}(x)}-1)\norm{\tilde{b}(x)}$, and
  \begin{align}\label{eq:u-s-closed-form}
    v_{s}(x)=
    \begin{cases}
      0 & \text{if }  \norm{\tilde{b}(x)}\leq 1,
      \\
      \frac{-\tilde{c}(x)+\sqrt{\tilde{c}(x)^2 +
          \bar{b}(x)^2}}{\bar{b}(x)}\tilde{b}(x) & \text{if }  \norm{\tilde{b}(x)}> 1.
    \end{cases}
  \end{align}
  Further assume $v_{s}(x)-r(x)\in \text{Im}(Q(x))$ for all
  $x\in\Gc$.~Then
  \begin{align*}
    u_{s}(x):=(Q^T(x)Q(x))^{-1} Q^T(x)(v_s(x)-r(x)) ,
  \end{align*}
  is $l$-continuously differentiable for all $x\in\Gc$. Moreover,
  $\norm{Q(x)u_s(x)+r(x)}\leq b(x)u_{s}(x)+c(x)$ for all $x\in\Gc$.
\end{proposition}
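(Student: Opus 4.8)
The plan is to reduce the second-order cone constraint, which is a constraint on $u\in\real^{m}$, to a scalar inequality on the auxiliary vector $v_{s}(x)\in\real^{m+1}$, and then treat feasibility and regularity in turn. Since $Q(x)^{\transpose}Q(x)$ is invertible, $Q(x)$ has full column rank $m$; hence $(Q^{\transpose}Q)^{-1}Q^{\transpose}$ is a left inverse of $Q$ and $Q(Q^{\transpose}Q)^{-1}Q^{\transpose}$ is the orthogonal projector onto $\Image(Q(x))$. Because $v_{s}(x)-r(x)\in\Image(Q(x))$ by hypothesis, applying this projector gives $Q(x)u_{s}(x)+r(x)=v_{s}(x)$, so $\norm{Q(x)u_{s}(x)+r(x)}=\norm{v_{s}(x)}$; moreover $b(x)u_{s}(x)=\tilde{b}(x)(v_{s}(x)-r(x))$, hence $b(x)u_{s}(x)+c(x)=\tilde{b}(x)v_{s}(x)+\tilde{c}(x)$. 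Therefore the target inequality $\norm{Qu_{s}+r}\le b u_{s}+c$ is equivalent to $\norm{v_{s}}\le\tilde{b}v_{s}+\tilde{c}$, which is what I would verify. The same identity applied to an arbitrary $u$ gives an auxiliary fact I will use repeatedly: if $u$ is strictly feasible for the SOCC at $x$ then $v:=Q(x)u+r(x)$ satisfies $\norm{v}<\tilde{b}(x)v+\tilde{c}(x)$, so when $\norm{\tilde{b}(x)}\le1$, Cauchy--Schwarz gives $\tilde{b}(x)v\le\norm{v}$ and therefore $\tilde{c}(x)>0$. In short, $\tilde{c}(x)>0$ at every $x\in\Gc$ with $\norm{\tilde{b}(x)}\le1$.

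For feasibility I would argue by cases. If $\norm{\tilde{b}(x)}\le1$, then $v_{s}(x)=0$ and $\norm{v_{s}(x)}=0\le\tilde{c}(x)$ by the auxiliary fact. If $\norm{\tilde{b}(x)}>1$, then $\bar{b}(x)>0$; writing $v_{s}=\phi\,\tilde{b}^{\transpose}$ with $\phi:=\frac{-\tilde{c}+\sqrt{\tilde{c}^{2}+\bar{b}^{2}}}{\bar{b}}\ge0$, one has $\norm{v_{s}}=\phi\norm{\tilde{b}}$ and $\tilde{b}v_{s}=\phi\norm{\tilde{b}}^{2}$, so, using $\bar{b}=\norm{\tilde{b}}(\norm{\tilde{b}}-1)$, the inequality $\norm{v_{s}}\le\tilde{b}v_{s}+\tilde{c}$ rearranges to $\tilde{c}+\phi\bar{b}\ge0$. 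Since $\phi\bar{b}=-\tilde{c}+\sqrt{\tilde{c}^{2}+\bar{b}^{2}}$, this is just $\sqrt{\tilde{c}^{2}+\bar{b}^{2}}\ge0$, which always holds; this establishes the inequality part of the claim.

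For regularity it suffices to show $v_{s}$ is $\Cc^{l}$ on $\Gc$, since matrix inversion is real-analytic, so $(Q^{\transpose}Q)^{-1}$ is $\Cc^{l}$ on $\Gc$ and then $u_{s}=(Q^{\transpose}Q)^{-1}Q^{\transpose}(v_{s}-r)$ is $\Cc^{l}$. On the open set $\setdef{x\in\Gc}{\norm{\tilde{b}(x)}>1}$ one has $\tilde{b}\ne0$, so $\norm{\tilde{b}}$, $\bar{b}$ and $\tilde{c}$ are $\Cc^{l}$, with $\bar{b}>0$ and $\tilde{c}^{2}+\bar{b}^{2}>0$, so $v_{s}$ is $\Cc^{l}$ there; on the open set $\setdef{x\in\Gc}{\norm{\tilde{b}(x)}<1}$, $v_{s}\equiv0$ is $\Cc^{\infty}$.

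The remaining and most delicate task is to show $v_{s}$ is $\Cc^{l}$ across $\setdef{x\in\Gc}{\norm{\tilde{b}(x)}=1}$; I expect this to be the main obstacle. At such $x$ one still has $\tilde{b}\ne0$, so $\norm{\tilde{b}}$ and $\bar{b}$ are $\Cc^{l}$ in a neighborhood, and $\tilde{c}>0$ by the auxiliary fact. The plan is to rationalize, writing $v_{s}=\frac{\bar{b}}{\tilde{c}+\sqrt{\tilde{c}^{2}+\bar{b}^{2}}}\,\tilde{b}^{\transpose}$ on $\setdef{x\in\Gc}{\norm{\tilde{b}(x)}>1}$, and to run a Sontag-type argument: positivity of $\tilde{c}$ keeps $\tilde{c}+\sqrt{\tilde{c}^{2}+\bar{b}^{2}}$ bounded away from $0$, so this expression is $\Cc^{l}$ in a full neighborhood of each boundary point, and the remaining work is to match it with the zero branch to order $l$. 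This gluing is genuinely more delicate than in Sontag's universal formula, where the analogue of $\bar{b}$ is $\norm{L_{g}V}^{2}\ge0$ and hence sign-definite at the transition, whereas here $\bar{b}$ changes sign across $\setdef{x\in\Gc}{\norm{\tilde{b}(x)}=1}$. I anticipate leaning on the strict-feasibility hypothesis precisely to control $\tilde{c}$ and the coefficient $\phi$ near the transition, and establishing the order-$l$ matching is the step I would expect to require the most care.
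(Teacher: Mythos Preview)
Your reduction to the transformed constraint $\norm{v}\le\tilde b(x)v+\tilde c(x)$, the case-by-case feasibility check, and the regularity argument away from $\{\norm{\tilde b}=1\}$ all coincide with the paper's proof (your treatment is in fact more explicit). At the transition $\{\norm{\tilde b}=1\}$ the paper writes $v_s(x)=\phi\bigl(\tilde c(x),\bar b(x)\bigr)\tilde b(x)^{\transpose}$ with
\[
\phi(c,\alpha)=\begin{cases}0,&\alpha\le 0,\\[2pt] \dfrac{-c+\sqrt{c^{2}+\alpha^{2}}}{\alpha},&\alpha>0,\end{cases}
\]
and, invoking Sontag's universal-formula argument, asserts that $\phi$ is analytic at $(c,0)$ for $c\neq 0$; since $\tilde c(x_0)>0$ at any such boundary point, this is meant to yield the $\Cc^{l}$ regularity of $v_s$.

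Your instinct that the gluing here is ``genuinely more delicate'' than in Sontag is correct, and in fact the order-$l$ matching you plan cannot be carried out for $v_s$ as defined. In Sontag's setting the second argument of $\phi$ is $\norm{L_gV}^{2}\ge 0$, so the $\alpha<0$ branch is never exercised and the rationalized expression $\alpha/(c+\sqrt{c^{2}+\alpha^{2}})$ furnishes an analytic extension that equals $0$ exactly at $\alpha=0$. Here, by contrast, $\bar b$ changes sign across the transition, and for $c>0$ the one-sided $\alpha$-derivatives of $\phi$ at $\alpha=0$ are $1/(2c)$ from the right and $0$ from the left. Concretely, take $\tilde b=(t,0)$ and $\tilde c=c_0>0$: for $t>1$ the first component of $v_s$ is $t^{2}(t-1)/\bigl(c_0+\sqrt{c_0^{2}+t^{2}(t-1)^{2}}\bigr)$ with derivative $1/(2c_0)$ at $t=1^{+}$, while for $t<1$ it is identically $0$. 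Hence whenever $\bar b$ crosses zero transversally, $v_s$ is not even $\Cc^{1}$ across $\{\norm{\tilde b}=1\}$. So the step you flag as requiring ``the most care'' is not merely delicate---it fails for the piecewise formula as stated, and the paper's appeal to Sontag does not transfer to this two-sided situation.
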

\begin{pf}
  Let $v = Q(x)u+r(x)$. Since
  $Q^{T}(x)Q(x)$ is invertible and $\norm{Q(x)u+r(x)}\leq b(x)u+c(x)$
  is strictly feasible on $\Gc$, the resulting SOCC
  $\norm{v} \leq \tilde{b}(x)v+\tilde{c}(x)$ is also strictly feasible
  on $\Gc$. Moreover, $v_{s}$ satisfies it. Indeed, if
  $\norm{\tilde{b}(x)}\leq 1$, since the SOCC is feasible there exists $v^*$ such that $\norm{v^*}\leq \tilde{b}(x)v^{*}+\tilde{c}(x)$
  and it follows that $\tilde{c}(x)\geq0$. The case
  $\norm{\tilde{b}(x)}>1$ follows from a direct calculation.  If
  $\norm{\tilde{b}(x)}\neq 1$, $v_{s}$ is $\Cc^{l}$ at $x$ because
  $\tilde{b}$ and $\tilde{c}$ are $\Cc^{l}$ at $x$. If
  $\norm{\tilde{b}(x)}=1$, then $\tilde{c}(x)\neq0$ (otherwise, if
  $\tilde{c}(x)=0$, since the SOCC
  $\norm{v} \leq \tilde{b}(x)v+\tilde{c}(x)$ is strictly compatible,
  there would exist $\hat{v}$ such that
  $\norm{\hat{v}} < \tilde{b}(x)\hat{v} \leq \norm{\hat{v}}$, which is a
  contradiction).  Now, from the proof of~\citep[Theorem 1]{EDS:89a},
  the function
  \begin{align*}
    \phi(c,\alpha):=\begin{cases}
      0 & \text{if } \ \alpha\leq0,
      \\
      \frac{-c+\sqrt{c^2+\alpha^2}}{\alpha} & \text{else},
    \end{cases}
  \end{align*}
  is analytic at points of the form $(c,0)$, with $c\neq0$, so $v_{s}$ is $\Cc^{l}$ for all
  $x\in\Gc$. Moreover, since
  $v_{s}(x)-r(x)\in\text{Im}(Q(x))$ for all $x\in\Gc$, it also follows
  that $\norm{Q(x)u_s(x)+r(x)}\leq b(x)u_{s}(x)+c(x)$ for all
  $x\in\Gc$ and $u_{s}$ is $\Cc^{l}$ for all
  $x\in\Gc$.
  $\blacksquare$
\end{pf}

From the proof of Proposition~\ref{prop:sontag-soccs}, we observe that
in the case where the SOCC takes the form~\eqref{eq:socc-worst-case},
a simpler expression is available for a controller satisfying it. As a
result of Proposition~\ref{prop:sontag-soccs}, the proposed formula
can be used to guarantee safety or stability under the worst-case or
probabilistic uncertainties described in
Section~\ref{sec:rob-prob-safe-stab}.  

\begin{remark}\longthmtitle{Using the universal formula to filter a
    nominal controller}\label{rem:univ-filter} {\rm The universal
    formula in Proposition~\ref{prop:sontag-soccs} can also be used to
    render an existing nominal controller safe or stable.  Indeed, let
    $u_{\text{nom}}:\real^n\to\real^m$ be a nominal controller and
    define $\tilde{f}:\real^n\to\real^n$ as
    $\tilde{f}(x)=f(x)+g(x)u_{\text{nom}}(x)$ and the modified
    dynamics
  \begin{align}\label{eq:modified-nominal-dynamics}
    \dot{x}&=\tilde{f}(x)+g(x)\tilde{u},
  \end{align}
  with $\tilde{u}\in\real^m$.  By leveraging the estimates of $f$,
  $g$, $V$, and $h$ either in the worst-case or probabilistic case, we
  can formulate SOCCs similar to~\eqref{eq:socc-worst-case}
  and~\eqref{eq:socc-gp}, respectively, for the modified
  system~\eqref{eq:modified-nominal-dynamics}. Depending on which SOCC
  we choose, this allows us to use the universal formula in
  Proposition~\ref{prop:sontag-soccs} to obtain a safe or a stable
  controller $\tilde{u}_s:\real^n\to\real^m$, which in turn results in
  $u_{\text{fi-nom}}(x) = u_{\text{nom}}(x)+\tilde{u}_s(x)$ being a
  safe or a stable controller for~\eqref{eq:control-affine-sys}. We
  refer to this controller $u_{\text{fi-nom}}$ as the filtered version
  of the nominal controller~$u_{\text{nom}}$. This generalizes the
  safe filtering of a nominal controller in the uncertainty-free case,
  cf.~\cite{LW-ADA-ME:17,ADA-SC-ME-GN-KS-PT:19}. } \demo
\end{remark}



\section{Simulations}\label{sec:sims}
In this section we illustrate our results in an example. For
simplicity, we focus on the case of worst-case error estimates.
Consider a control-affine planar system of the
form~\eqref{eq:control-affine-sys} with
  $f(x,y)=(-x,-(x^2+5)y)$ and $g(x,y)=(1,0.1)$.
We consider the CBF $h(x,y)=x^{2}+(y-4)^{2}-4$.

\emph{From data to estimates and error bounds:} We obtain here
worst-case error models, cf.  Section~\ref{sec:rob-prob-safe-stab},
from data. For simplicity, we assume that the CLF
$V(x,y)=\frac{1}{2}(x^2+y^2)$ is known, so that
$\widehat{\nabla V}=\nabla V$ and $\hat{V}=V$. We also assume that the
obstacle is known to be a circle with center at $(0, 4)$, but its
radius is uncertain, so that $\hat{h}(x,y)=x^{2}+(y-4)^{2}-3.8$, and
$\widehat{\nabla h}=\nabla h$, $e_{h}=0.4$, $e_{\nabla h}=0$. We have
access to an oracle that, given a query point $(x,y)\in\Cc$, returns
noiseless measurements $(f(x,y),g(x,y))$ of the functions
in~\eqref{eq:control-affine-sys} (the noisy case can be considered
without major modifications).  Given a set of $N$ measurements
$\Dc=\{(x_i, y_i),f(x_i, y_i),g(x_i, y_i)\}_{i=1}^{N}$ obtained by
querying the oracle, we estimate $f$ at $(x,y)\in\real^{2}$ as
$\hat{f}(x,y)=f(p_{\text{cl}}(x,y))$, where $p_{\text{cl}}(x,y)$ is
the closest datapoint to $(x,y)$.  Prior knowledge of (not necessarily
tight) Lipschitz constants of $f$ and $g$ in a compact region
containing the origin, the initial conditions and
$\{ (x_i,y_i) \}_{i=1}^N$ ($K_f =28.0$ and $K_g =3.2$
  respectively) is also available. We compute the corresponding
worst-case error bounds as
$e_{f}(x,y):=K_{f}\norm{(x,y)-p_{\text{cl}}(x,y)}$.  We do similarly
for $\hat{g}$ and~$e_{g}$.

\begin{figure*}[htb]
  \centering
  {\includegraphics[width=0.8\linewidth]{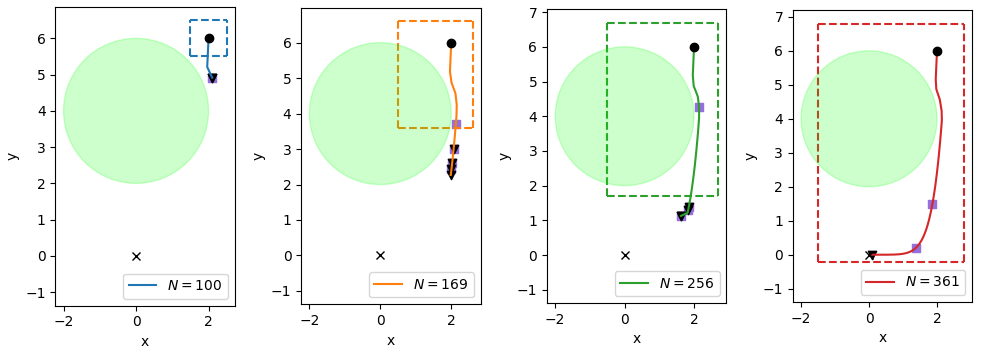}}
  \caption{Safe stabilization of a planar system with worst-case
    uncertainty error bounds. The green ball is the unsafe set and
    black dots denote initial conditions. Dashed lines enclose the
    region where data is located for different $N$.  Solid lines show
    the evolution under the corresponding CLF-CBF-SOCP controller
    in~\eqref{eq:st-sf-socp}. Black triangles indicate points where
    the sufficient conditions for
    feasibility~\eqref{eq:small-errors-compat} in
    Proposition~\ref{prop:small-errors} do not hold.
      Purple squares denote points where
      the root-finding method (\texttt{fsolve} from Python's \texttt{SCIPY} library) 
      did not return a solution satisfying the
      sufficient condition of
      Proposition~\ref{prop:nec-suff-cond-compat}.}
  \label{fig:sim-offline}
\end{figure*}

\emph{Performance dependency on error estimates:} Here we illustrate
how smaller estimation errors lead to improved performance. We use
different datasets with different number of data points $N$ to
generate $\hat{f}$, $\hat{g}$, $e_{f}$, and $e_{g}$. We solve the
resulting CLF-CBF-SOCP every $0.01$s with initial condition at
$(2.0, 6.0)$ and plot the trajectories until it becomes unfeasible. We
compare the results for different $N$ in
Figure~\ref{fig:sim-offline}. Larger datasets with data from a
neighborhood of the origin allow trajectories to converge closer to
the origin before the problem becomes unfeasible. This illustrates one
of the critical points of the paper: optimization-based control
formulations that take uncertainty into account in order to ensure
safety or stability might be unfeasible depending on the specific
system and the magnitude of the errors in the employed
approximations. Our results here provide quantifiable conditions to
determine whether the accuracy of the approximations is sufficient or,
instead, they need to be refined in order to guarantee feasibility.
In the plot, we observe that the sufficient conditions in
Propositions~\ref{prop:small-errors}
and~\ref{prop:nec-suff-cond-compat} serve as a good indicator of
when the SOCP actually becomes unfeasible, hence illustrating how
they can be used to infer when the available estimates are
insufficient to guarantee that the controller is well defined.
%
%


\emph{Online safe stabilization:} We illustrate also the case where
data is collected online. We start from an initial set of 150
measurements of $f$, $g$ and $h$ near the initial condition obtained
by querying the oracle.  Given an initial condition, at
  every $0.01$s we check whether the conditions
  in~\eqref{eq:small-errors-compat} hold. If this is the case, we find
  the CLF-CBF-SOCP controller and execute it.  If during the
execution the conditions in~\eqref{eq:small-errors-compat} stop being
satisfied at some point $\bar{x}$, we query the oracle to obtain
measurements of $f$ and $g$ at $\bar{x}$ (making it feasible) and a
small neighborhood around it (for improved performance).
Figure~\ref{fig:sim-online} illustrates executions of this procedure
for three different initial conditions.  As trajectories
approach the origin, more measurements need to be taken because the
conditions in~\eqref{eq:small-errors-compat} become harder to satisfy.

\begin{figure}[htb]
  \centering
  {\includegraphics[width=0.6\linewidth]{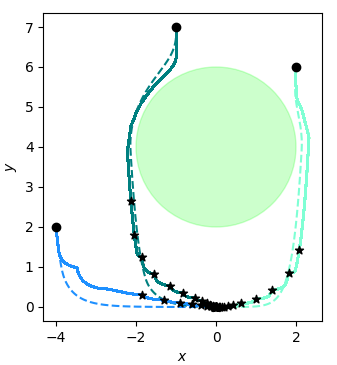}}
  \caption{
    Safe stabilization of a planar system with worst-case
    uncertainty error bounds. The green ball is the unsafe set and
    black dots denote initial conditions.
    The solid lines display the evolution of the controller obtained
    by solving the CLF-CBF-SOCP~\eqref{eq:st-sf-socp}. Black stars
    denote points where measurements have been taken. All trajectories
    asymptotically converge to a ball around the origin of radius
    0.01.  For reference, the dashed lines display the evolution of a
    min-norm controller with perfect knowledge of the dynamics, CBF
    and CLF (CLF-CBF QP)~\citep{ADA-XX-JWG-PT:17}, for which the
    trajectories stay safe and asymptotically converge to the origin.  }
  \label{fig:sim-online}
\end{figure}


\emph{Time complexity:} We show here the computational
  savings of checking the sufficient conditions in
  Propositions~\ref{prop:small-errors}
  and~\ref{prop:nec-suff-cond-compat} as compared to directly solving
  the SOCP using the Embedded Conic Solver from the Python library
  CVXPY.  Figure~\ref{fig:time-duration} shows that the time
  complexity of using the SOCP solver is higher than the time
  complexity of checking the sufficient condition in
  Proposition~\ref{prop:nec-suff-cond-compat}, which is in turn higher
  than the time complexity of checking the sufficient condition of
  Proposition~\ref{prop:small-errors}.  Since, in general,
  state-of-the-art SOCP solvers provide infeasibility and optimality
  certificates with the same time complexity, cf.~\citep[Section
  A]{AD-EC-SB:13}, our sufficient conditions can be used to save
  computation time in the case where the problem is unfeasible, cf.
  Remark~\ref{re:practical-app}.

\begin{figure}[htb]
  \centering
  {\includegraphics[width=0.99\linewidth]{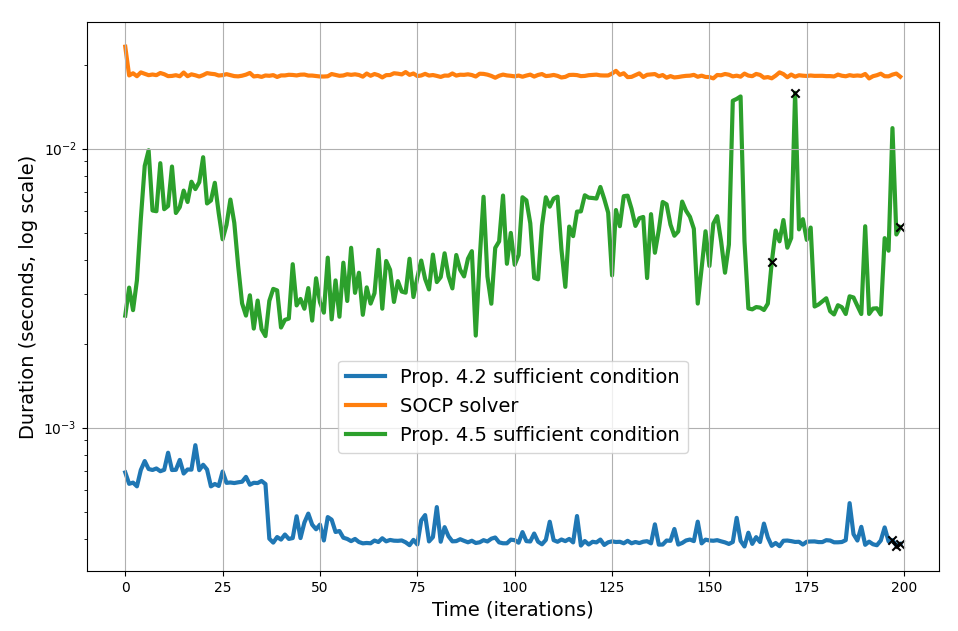}}
  \caption{{\color{blue} Time complexity comparison of evaluating the sufficient
    conditions in Proposition 4.2 (blue), Proposition 4.5 (green)
    and using an SOCP solver (orange) along the trajectory with $N=361$ in
    Figure~\ref{fig:sim-offline}. The black crosses denote points where the
    sufficient conditions do not hold. On average, the sufficient
    conditions in Proposition 4.2 is 50 times faster to evaluate
    than directly solving the SOCP.}
  }
  \label{fig:time-duration}
\end{figure}

\section{Conclusions}
We have studied conditions to ensure the safe stabilization of a
nonlinear affine control system under uncertainty.  Given either
worst-case or probabilistic estimates of the dynamics, CBF and CLF,
SOCCs encode the impact of uncertainty on the ability to guarantee
stability and safety. We have provided conditions for
  the compatibility of the relevant pairs of SOCCs and provided
  explicit bounds on the error estimates that ensure these SOCCs are
  compatible. We have built on these results to ensure the existence
of a smooth safe stabilizing controller, to show the
  point-Lipschitz and locally Lipschitz regularity of the min-norm
CLF-CBF-SOCP-based controller, and to prove the regularity of a
universal controller for the satisfaction of a single SOCC.  Future
work will characterize the conditions for compatibility in terms of
data, design online safe stabilization mechanisms that balance
computational effort, sampling rate, and performance using
resource-aware control, explore the design of universal formulas for
more than one SOCC, and implement our results on physical testbeds.


{\small
\bibliography{../bib/alias,../bib/JC,../bib/Main-add,../bib/Main}
\bibliographystyle{plainnat}
}

\end{document}